\theoremstyle{thmstyleone}%
\newtheorem{theorem}{Theorem}[section]
\newtheorem{lemma}[theorem]{Lemma}
\newtheorem*{lemma*}{Lemma}
\newtheorem{corollary}[theorem]{Corollary}
\newtheorem{definition}[theorem]{Definition}
\theoremstyle{thmstyletwo}%
\newtheorem{remark}{Remark}%
\begin{document}
\title[Non-Markovian dynamics: the memory-dependent probability density evolution equations]{Non-Markovian dynamics: the memory-dependent probability density evolution equations}
	
	
	\author[1,2,4]{\fnm{Bin} \sur{Pei}}\email{binpei@nwpu.edu.cn}
	
	\author[1]{\fnm{Lifang} \sur{Feng}}\email{FLF.fenglifang@outlook.com}
	
	\author[3]{\fnm{Yunzhang} \sur{Li}}\email{li\_yunzhang@fudan.edu.cn}
	
	\author*[1,4]{\fnm{Yong} \sur{Xu}}\email{hsux3@nwpu.edu.cn}
	
	\affil*[1]{\orgdiv{School of Mathematics and Statistics}, \orgname{Northwestern Polytechnical University}, \orgaddress{\city{Xi'an}, \postcode{710072}, \country{China}}}
	
	\affil[2]{\orgname{Research \& Development Institute of Northwestern Polytechnical University in Shenzhen}, \orgaddress{\city{Shenzhen}, \postcode{518057}, \country{China}}}
	
	\affil[3]{\orgdiv{Research Institute of Intelligent Complex Systems}, \orgname{Fudan University}, \orgaddress{\city{Shanghai}, \postcode{200433}, \country{China}}}
	
	\affil[4]{\orgdiv{MOE Key Laboratory for Complexity Science in Aerospace}, \orgname{Northwestern Polytechnical University}, \orgaddress{\city{Xi'an}, \postcode{710072}, \country{China}}}
	
	
	\abstract{This paper aims to investigate the non-Markovian dynamics. The governing equations are derived for the probability density functions (PDFs) of non-Markovian stochastic responses to Langevin equation excited by combined fractional Gaussian noise (FGN) and Gaussian white noise (GWN). The main difficulty here is that the Langevin equation excited by FGN cannot be augmented by a filter excited by GWN, leading to the inapplicability of It\^o stochastic calculus theory. Thus, in the present work, based on the fractional Wick It\^o Skorohod integral and rough path theory, a new non-Markovian probability density evolution method is established to derive theoretically the memory-dependent probability density evolution equation (PDEEs) for the PDFs of non-Markovian stochastic responses to Langevin equation excited by combined FGN and GWN, which is a breakthrough to stochastic dynamics. Then, we extend an efficient algorithm, the local discontinuous Galerkin method, to numerically solve the memory-dependent PDEEs. Remarkably, this proposed method attains a higher accuracy compared to the prevalent methods such as finite difference, path integral (PI) and Monte Carlo methods, and boasts a broader applicability than the PI method, which fails to solve the memory-dependent PDEEs.  Finally, several numerical examples are illustrated to verify the proposed scheme.}
	
	\keywords{Non-Markovian dynamics, fractional Gaussian noise, local discontinuous Galerkin, rough path theory, fractional Wick It\^o Skorohod integral}
	
	\pacs[MSC Classification]{60G22, 60L20, 82C31}
	
	\maketitle
	
\section{Introduction}\label{sec1}
The main aim of the Langevin equation is to explain the Brownian motion (BM) and its random behavior \cite{langevin1908theorie}. Without taking into account the thermal fluctuation of the particles, a particle in a fluid is subjected only to a friction force that is proportional to the velocity of the particle. The solution to the equation of motion is an exponential drop in velocity toward zero. But in nature, because of the collisions and interactions with other particles, that would not be the case. Investigating each single interaction and writing a proper equation of motion for each interaction is an impossible act due to the huge number of variables involved in it. This means that one has to introduce a force called the Langevin force to take care of all of these interactions and their random nature. The random force in the Langevin equation is usually regarded as the Gaussian white noise (GWN), i.e., the force at each point in time depends only on the immediate point before that time. Note that the GWN is not a differentiable function in the ordinary sense, which makes it difficult to express and handle the Langevin equation excited by GWN strictly mathematically. The introduction of the It\^o integral \cite{ito1951stochastic} enables the Langevin equation to be studied within the theoretical framework of It\^o stochastic calculus \cite{karatzas1991brownian,cohen2015stochastic}. This framework provides a complete set of mathematical tools and methods to deal with the Langevin equation excited by GWN, including It\^o's lemma, Feynman-Kac formula, and so on. Through these tools and methods, one can conduct more in-depth and systematic research on the Langevin equation, such as analyzing important properties like the stability, asymptotic behavior and ergodicity \cite{ottobre2011asymptotic,mao2007stochastic,kloeden1992stochastic}. The It\^o stochastic calculus theory provides a solid mathematical basis for understanding and predicting stochastic phenomena in various mechanical, physical, chemical, biological, and other systems \cite{lin1995probabilistic,coffey2004langevin}. 

However, not all random forces experienced by particle motion satisfy the assumptions that they are uncorrelated at different times \cite{hurst1951long,haunggi1994colored,chen2015memoryless,rangarajan2003processes,Li2024Transition}. Fractional Gaussian noise (FGN) is a kind of model to describe the memory. The Langevin equation excited by FGN has extensive applications in mechanics, physics, chemistry, economy and finance, and other fields \cite{lu2024response,bakalis2025barrier,chong2024frictions}. It has also been used to model the memory-enhanced stochastic sediment transport in persistent turbulent flow \cite{hung2024modeling} and ultrafast dynamics in porous media \cite{xu2020ultrafast}.
Note that the FGN has characteristics such as the long-range dependence and non-Markovian property, which make it significantly different from GWN. The It\^o stochastic calculus theoretical framework can not be applied in solving the Langevin equation excited by FGN. Thus, this paper will try to establish some new mathematical tools and methods to deal with the Langevin equation excited by FGN by using rough path theory to replace the theoretical framework of It\^o stochastic calculus.
	
To better quantify the uncertainty evolution in stochastic dynamics, the most effective way is to calculate the probability density functions (PDFs) of the Langevin equation excited by FGN, which contains the complete statistical information on the uncertainty. In this regard, there are well established formulas to write the corresponding Fokker-Planck-Kolmogorov equation (FPK) \cite{risken1996fokker,soize1994fokker} of the Langevin equation with GWN. The derivations resort to the Chapman-Kolmogorov equation as a starting point with the Markovian assumptions for the system responses. However, the FPK type equation for PDFs of the Langevin equation excited by FGN is no longer applicable due to the non-Markovian property caused by FGN. 
An alternative way to obtain the FPK type equation is by deriving the It\^o formulas, but due to the non-semi-martingale property of FGN, there is no It\^o formula of analytical form for the functions of solutions to the Langevin equation excited by FGN. The unavailability of FPK type equations poses as a significant obstacle on quantifying the uncertainty in the Langevin equation excited by FGN. In this paper, we will overcome this problem by combining the fractional Wick It\^o Skorohod (FWIS) integral \cite{ducan2000stochastic} and rough path theory \cite{lyons1998differential}. It allows us to solve the Langevin equation excited by FGN pathwise, not relying on It\^o calculus theory. That means, in the framework of the rough path theory, the Langevin equation excited by FGN can be solved in the pathwise approach by taking a realization of the driving path. The proposed approach has the advantage of offering a clear break between the deterministic rough path calculus and the Langevin equation excited by FGN.
This is a first attempt to investigate the non-Markovian dynamics by combining the FWIS integral and rough path theory.
	
Up to now, the general result is unknown, but it is necessary to mention that efforts have been made to obtain approximate FPK type equations for some special cases. For instance, Vyoral \cite{vyoral2005kolmogorov} derived the Kolmogorov backward equations for a linear stochastic differential equations (SDEs) excited by FGN, then one can obtain the FPK equations by using the relationship between the adjoint of the operators that FPK and Kolmogorov backward equations required. Other special cases are given by \cite{choi2021entropy}  and \cite{vaskovskii2022analog} where the authors have derived the governing equations for PDFs of SDEs excited by FGN, under the very stringent condition that the drift terms are missing. The papers \cite{unal2007fokker} and \cite{zeng2012fbm} try to derive the FPK equations in more geneal cases, however, \cite{zeng2012fbm} has incorrectly used the fractional It\^o formula to derive the reducibility conditions of nonlinear SDEs to linear SDEs.  The It\^o formula obtained in \cite{bender2003an} for FGN did not work for the SDEs in \cite{unal2007fokker}. Therefore, their derivation might require a modification. Thus, the acquisition for the governing equations of SDEs driven by both FGN and GWN with drift term remains open. In this paper, we mainly use the FWIS integral theory and rough path theory to develop a non-Markovian probability density evolution method (PDEM). Then, the memory-dependent probability density evolution equations (PDEEs) for the PDFs of SDEs excited by combined FGN and GWN are derived theoretically. Here, the fractional-power time varying term, $Ht^{2H-1}$ in Theorems \ref{fplinax} and \ref{fpcomm}, which reflects the presence of correlation, is an essential consequence of the correlations of FGNs. 
	
	Numerical approximation plays a crucial role in solving FPK equations since analytical solutions are usually not available. The Monte Carlo (MC) method can provide satisfactory accuracy PDFs' estimations for stochastic responses of SDEs if sufficient large sample paths are available, which means a longer calculation time is needed. In addition to the MC method, there are also some other numerical algorithms, for example, that encompass the methods of finite difference (FD)\cite{pichler2013numerical}, finite element (FE)\cite{kumar2006solution}, path integral (PI) \cite{wehner1983numerical,xu2019path}, deep learning \cite{zhang2022solving,zhang2023deep,xiao2024a}, among others.
	Traditional numerical methods struggle to achieve satisfactory high-precision solutions, while the PI method is not well-suited for non-Markovian systems.
	In this paper, the local discontinuous Galerkin (LDG) method is used to obtain the numerical solutions of exceptional precision for the memory-dependent PDEEs. The LDG method was introduced by Cockburn and Shu \cite{cockburn1998local} to solve convection-diffusion equations, inspired by the successful numerical experiments of Bassi and Rebay \cite{bassi1997high} on the compressible Navier-Stokes equations. 
	This scheme extends the discontinuous Galerkin (DG) method, originally developed by Cockburn et al. \cite{cockburn1989tvb1, cockburn1989tvb, cockburn1990runge, cockburn1998runge} for nonlinear hyperbolic systems, and retains its key advantages: facilitates the design of high-order approximations and minimizes data communication overhead, thereby enabling efficient parallel implementations and reducing computation time.
	We anticipate that the LDG method can extend beyond PI method, which is limited to the SDEs with Markovian properties, to non-Markovian SDEs while retaining the advantages and flexibility of the PI method. 
    
The aim of this paper has twofold:
		(i) the memory-dependent PDEEs are proposed for PDFs of the Langevin equations excited by combined FGN and GWN;
		(ii) an efficient LDG algorithm is developed to numerically solve the given memory-dependent PDEEs. We will develop new approaches, based on FWIS integral, rough path theory and LDG method, to achieve above goals. 
        
        This paper is organized as follows.  In Section 2, the corresponding memory-dependent PDEEs of the linear SDEs excited by FGN and GWN are derived via the FWIS integral theory, and the corresponding memory-dependent PDEEs of the nonlinear SDEs are derived by utilizing the rough path theory. The LDG method is introduced in Section 3, which is capable of obtaining high-precision transient PDF solutions of the memory-dependent PDEEs. In Section 4, the accuracy of the proposed numerical method and the correctness of the memory-dependent PDEEs are verified through some numerical examples. Finally, some discussions and conclusions are drawn in Section 5.

\section{The governing equations}
    
	\subsection{The mathematical basis for Langevin equation excited by FGN and GWN}
	
	We start with the following Langevin equation excited by combined FGN and GWN
	\begin{equation}\label{mSDS}
	\dot{X}_t= f(t,X_t)+g(t,X_t)\xi_t+h(t,X_t)\xi_t^H,
	\end{equation}
	where $X_0=x_0$, the initial value of $X_t$ at time $t=0$, is given and deterministic. $ f, g$ and $ h $ are coefficients of drift and diffusion terms, respectively. $\xi_t $ is a unit GWN and $\xi^{H}_t$ is a unit FGN with Hurst parameter $1/2<H<1$. $\xi_t$ and $\xi^{H}_t$ are independent. 
        The autocorrelation function \cite{qian2003fractional} of $\xi^{H}_t$ with $\tau$ as the correlation time is
		\begin{align}\label{fgn-cor}
		R_H(\tau)=\mathbb{E}[\xi^H_0\xi^H_\tau]=2H(2H-1)|\tau|^{2H-2}+2H|\tau|^{2H-1}\delta(\tau).
		\end{align}
For $ H = 1/2 $, $\xi^{H}_t$ reduces to $\xi_t$, where the first term in (\ref{fgn-cor}) is zero and the second term is Dirac-Delta function $ \delta(\tau) $. From (\ref{fgn-cor}), in contrast to $\xi_t$, $\xi^{H}_t$ exhibits the strong long-range correlations and non-Markovian characteristics. For $ 1/2 < H < 1 $, the spectral density of $\xi^{H}_t$ can be calculated by
		\begin{align*}
		S_H(\omega)=\frac{1}{2\pi}\int_{-\infty}^{\infty}R(\tau)e^{-i\omega \tau}\mathrm{d}\tau=\frac{\Gamma(2H+1)\sin(H\pi)}{\pi}|\omega|^{1-2H},
		\end{align*}
		which corresponds to the well known $ 1/\omega^\alpha $ noise \cite{weissman19881}. Therefore, $\xi^{H}_t$ has a simple power-law spectral density function over all frequencies $ \omega $.

	From now, we are going to give a quick (non-rigorous) introduction into the topic of Langevin equation in the framework of SDEs and explore their connections to the Fokker-Planck type equations. Before we can discuss the theory of SDEs, we have to introduce the fractional Brownian motion (FBM), which is of fundamental importance for this subject. The FBM ($B^H_t, t\geq0$) is a zero mean centered Gaussian process
		 $\mathbb{E}[B^H_t]=0,$
		with $B^H_0=0$ and auto-covariance function
		$$\mathbb{E}[B^H_tB^H_s]=1/2 (t^{2H}+s^{2H}-|t-s|^{2H}),\qquad t,s\geq 0.$$
		Here $ H $ is the Hurst index, and has a bounded range between 0 and 1. For $ H=1/2 $ the correlations vanish, and FBM reduces to BM $(W_t, t\geq 0)$. The increment processes $ B^H_t-B^H_s $ have a Gaussian distribution with 
		\begin{equation*}
		\mathbb{E}[B^H_t-B^H_s]=0, \qquad
		\mathbb{E}[(B^H_t-B^H_s)^2]=|t-s|^{2H},\qquad t,s\geq 0,
		\end{equation*}
and stationary increment property.
$ \xi^H_t $ can be formally written as
	$\xi^H_t = \frac{\mathrm{d}B^H_t}{\mathrm{d}t},$
		which is Gaussian and stationary and  $\xi_t$ can be formally written as
	$\xi_t = \frac{\mathrm{d}W_t}{\mathrm{d}t}$. The FBM is not a semi-martingale unless $H =1/2$, so the usual It\^o's stochastic calculus is not valid. Nevertheless, there are several approaches to a stochastic calculus in order to interpret (\ref{mSDS}) in a meaningful way. We do not discuss in this paper these approaches, but refer the interested reader to \cite{biagini2008stochastic,mishura2008stochastic,pei2024almost,alos2003stochastic,coutin2002stochastic,carmona2003stochastic,zahle1998integration,ducan2000stochastic,dai1996ito,lin1995stochastic}. 
	
	In this paper, we understand (\ref{mSDS}) in the sense of Stratonovich type
	\begin{equation}\label{mSDEs}
	\mathrm{d}X_t=f(t,X_t) \mathrm{d}s+g(t,X_t) \circ \mathrm{d}W_t+h(t,X_t) \circ \mathrm{d} B^H_t,
	\end{equation}
	where $\circ \mathrm{d} W_t$ indicates Stratonovich integral in usual case and $\circ \mathrm{d} B^H_t$ indicates symmetric pathwise integral in the sense of \cite{dai1996ito,zahle1998integration,lin1995stochastic}.

	\subsection{The memory-dependent PDEEs associated with the linear autonomous SDEs}

    Generally speaking, for the autonomous SDEs (\ref{mSDEs}) driven purely by BM, the corresponding FPK equation is well-established \cite{sun2006stochastic} as
	\begin{align}\label{FPK-Stro}
	\frac{\partial }{\partial t}p(x,t)=-\frac{\partial }{\partial x}\big\{\big(f(t,x)+\frac{1}{2}g(t,x)\frac{\partial g(t,x)}{\partial x}\big)p(x,t)\big\}
	+\frac{\partial^2 }{\partial x^2}\big\{\frac{1}{2}g^2(t,x)p(x,t)\big\}.
	\end{align}
	Conversely, for the autonomous SDEs (\ref{mSDEs}) driven purely by FBM without a drift term, the corresponding governing equation reads \cite{choi2021entropy}
\begin{align}\label{FPK-onlyfbm}
	\frac{\partial }{\partial t}p(x,t)=-\frac{\partial }{\partial x}\big\{Ht^{2H-1}h'(x)h(x)p(x,t)\big\}
	+\frac{\partial^2 }{\partial x^2}\big\{Ht^{2H-1}h^2(x)p(x,t)\big\}.
	\end{align}
	
	However, the governing equations associated with the linear and nonlinear autonomous SDEs (\ref{mSDEs}) excited by FGN and GWN have not yet been determined. We plan to develop a new method (non-Markovian PDEM), to derive the memory-dependent PDEEs for PDFs of SDEs excited by FGN and GWN.
    
Now, we first consider the autonomous SDEs (\ref{mSDEs}) in the linear case, i.e., $f(t, X_t)= A_t X_t, g(t, X_t)= B_t X_t$ and $ h(t, X_t)= C_t X_t$, where $ A_t,B_t,C_t $ are continuous functions w.r.t. $ t $. We rewrite (\ref{mSDEs}) as following
	\begin{equation}\label{mSDEline}
	\mathrm{d}X_t=A_t X_t \mathrm{d}t+B_t X_t \circ \mathrm{d}W_t+C_t X_t  \circ \mathrm{d} B^H_t.
	\end{equation}
	
	In this subsection, we derive the memory-dependent PDEEs for the linear SDEs (\ref{mSDEline}) by the FWIS integral theory \cite{ducan2000stochastic,hu2005stochastic}.
    The key advantage of this method is that one can transform the symmetric pathwise integral $\int X \circ \mathrm{d} B^H$ into the FWIS integral $\int X \diamond \mathrm{d} B^H$, thereby changing the expectation of random terms from non-zero to zero, i.e., $\mathbb{E}[\int X  \circ \mathrm{d} B^H ]\ne0$ but $\mathbb{E}[\int X \diamond\mathrm{d}B^H]=0 $. This property is crucial to derive the memory-dependent PDEEs.
	
\begin{definition}[{\cite[Definition 3.4 and Definition 3.5]{ducan2000stochastic}}]
	Let $\mathscr{L}^2_{\phi}(0, T)$ be be the space of measurable, scalar-valued stochastic processes $(F_t,t\in[0,T])$, such that
	$$ \mathbb{E}\bigg[\int_0^T\int_0^T F_s F_t \phi(s,t)\mathrm{d}s\mathrm{d}t\bigg]<\infty, $$ 
	where $\phi(t,s)=H(2H-1)|t-s|^{2H-2}$.
\end{definition}

\begin{lemma}[{\cite[Theorem 3.12]{ducan2000stochastic}}]\label{sym-path-to-fwick}
	If $ F\in \mathscr{L}^2_{\phi}(0, T)$, then the symmetric pathwise integral $\int_{0}^{t} F_s  \circ \mathrm{d} B^H_s$ and the FWIS integral $\int_{0}^{t} F_s \diamond\mathrm{d}B^H_s$ exist, and the following equality is satisfied:
	\begin{equation}\label{symtofwick}
	\int_{0}^{t} F_s  \circ \mathrm{d} B^H_s= \int_{0}^{t} F_s \diamond\mathrm{d}B^H_s+\int_{0}^{t}D^{\phi}_sF_s\mathrm{d}s, \,  \forall t\in[0,T], \,a.s.,
	\end{equation}
	where $ D^{\phi}_t F_t $ is the Mallivian derivative of $F_t$ {\rm \cite{ducan2000stochastic}}.
\end{lemma}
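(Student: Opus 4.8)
The plan is to verify (\ref{symtofwick}) first on a dense class of integrands, for which both integrals reduce to Riemann sums, and then to extend it to all $F\in\mathscr L^2_\phi(0,T)$ by a closability argument. I would start from a simple process $F_s=\sum_{i=0}^{n-1}F_{t_i}\mathbf 1_{[t_i,t_{i+1})}(s)$ associated with a partition $\pi=\{0=t_0<\dots<t_n=t\}$, each $F_{t_i}$ a smooth cylindrical functional of $B^H$ (so it lies in the domain of the Malliavin derivative $D$ and of $D^{\phi}$). For such $F$ the FWIS integral is the limit of the left-point Wick--Riemann sums $\Sigma^{\diamond}_\pi=\sum_i F_{t_i}\diamond(B^H_{t_{i+1}}-B^H_{t_i})$, while the symmetric pathwise integral is the limit of the ordinary-product Riemann sums $\Sigma_\pi=\sum_i F_{t_i}(B^H_{t_{i+1}}-B^H_{t_i})$; for $H>1/2$ and the integrands at hand the left-point, right-point and symmetric sums share a common Young-type limit, so the choice of evaluation point is immaterial, and it is the difference $\Sigma_\pi-\Sigma^{\diamond}_\pi$ that must be identified.

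The algebraic core is the relation between ordinary and Wick products against a first-chaos element: for smooth cylindrical $G$ and deterministic $g\in L^2_\phi(0,T)$,
\begin{equation*}
G\int_0^T g_u\,\mathrm{d}B^H_u=G\diamond\!\int_0^T g_u\,\mathrm{d}B^H_u+\int_0^T g_u\,D^{\phi}_uG\,\mathrm{d}u,\qquad D^{\phi}_uG:=\int_0^T\phi(u,r)\,D_rG\,\mathrm{d}r,
\end{equation*}
a consequence of the divergence rule $\delta(Gg)=G\,\delta(g)-\langle DG,g\rangle$ (with $\delta$ the divergence operator, which coincides with the FWIS integral for $H>1/2$) and the symmetry of $\phi$. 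Taking $g=\mathbf 1_{[t_i,t_{i+1}]}$ and $G=F_{t_i}$, then summing over $i$, gives the discrete identity
\begin{equation*}
\Sigma_\pi=\Sigma^{\diamond}_\pi+R_\pi,\qquad R_\pi:=\sum_i\int_{t_i}^{t_{i+1}}D^{\phi}_uF_{t_i}\,\mathrm{d}u,
\end{equation*}
i.e.\ a discrete form of (\ref{symtofwick}), in which $R_\pi$ is a Riemann-type sum for $\int_0^t D^{\phi}_uF_u\,\mathrm{d}u$.

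It then remains to let $|\pi|\to0$. First, $\Sigma^{\diamond}_\pi\to\int_0^t F_u\diamond\mathrm{d}B^H_u$ in $L^2(\Omega)$: this rests on the isometry-type estimate for the FWIS integral, in which the kernel $\phi(u,r)=H(2H-1)|u-r|^{2H-2}$ controls the second moment. Second, $R_\pi\to\int_0^t D^{\phi}_uF_u\,\mathrm{d}u$: here one checks that $u\mapsto D^{\phi}_uF_u$ is integrable and that the frozen evaluations $D^{\phi}_uF_{t_i}$ converge to $D^{\phi}_uF_u$ as the mesh shrinks; since $D^{\phi}_uF_u=H(2H-1)\int_0^T|u-r|^{2H-2}D_rF_u\,\mathrm{d}r$ carries the diagonal singularity $|u-r|^{2H-2}$, this is precisely where the hypothesis $F\in\mathscr L^2_\phi(0,T)$---the finiteness of $\mathbb E\int_0^T\!\int_0^T F_sF_t\,\phi(s,t)\,\mathrm{d}s\,\mathrm{d}t$---together with the Malliavin--Sobolev regularity of $F$ is used to dominate the double integral. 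In particular, the existence of the symmetric pathwise integral is not presupposed but obtained, being the sum of the FWIS integral (just constructed) and the absolutely convergent Lebesgue integral $\int_0^t D^{\phi}_uF_u\,\mathrm{d}u$. A density/closability argument then carries (\ref{symtofwick}) from simple $F$ to all $F\in\mathscr L^2_\phi(0,T)$.

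The main obstacle is this last limiting stage, and within it the handling of the two singular objects built from $|u-r|^{2H-2}$: establishing $L^2(\Omega)$-convergence of the Wick--Riemann sums (which forces one to pin down how much regularity beyond $F\in\mathscr L^2_\phi(0,T)$ is really needed, and to control a singular double integral inside the second-moment bound) and identifying the limit of $R_\pi$ with the diagonal trace $\int_0^t D^{\phi}_uF_u\,\mathrm{d}u$ (which again amounts to taming that diagonal singularity, now inside a Riemann-sum limit). By comparison, the reduction to simple processes and the concluding closability step are routine.
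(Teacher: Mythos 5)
The paper never proves Lemma \ref{sym-path-to-fwick}: it is imported verbatim as Theorem 3.12 of \cite{ducan2000stochastic}, so there is no in-paper argument to compare against. Your outline is essentially a reconstruction of the proof in that reference: reduction to simple processes, the ordinary-versus-Wick product identity coming from the duality/divergence rule (which produces exactly the correction $\int g_u D^{\phi}_u G\,\mathrm{d}u$), and an $L^2$ passage to the limit that turns the frozen sums $R_\pi$ into the diagonal trace $\int_0^t D^{\phi}_s F_s\,\mathrm{d}s$. One point you correctly sense but should state plainly: as written, $F\in\mathscr L^2_{\phi}(0,T)$ alone does not even guarantee that $D^{\phi}_sF_s$ exists, so both your limiting step and the cited theorem in fact require the additional Malliavin-differentiability and continuity hypotheses imposed in \cite{ducan2000stochastic} (the same ones the present paper implicitly assumes when it later invokes $\sup_{0\le t\le T}\mathbb{E}[|D^{\phi}_tF_t|^2]<\infty$); with that proviso your sketch matches the source's argument.
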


\begin{remark}
	$\int_{0}^{t}D^{\phi}_sF_s\mathrm{d}s$ in Lemma \ref{sym-path-to-fwick} is a correction term between the symmetric pathwise integral and the FWIS integral w.r.t. FBM term, just as Wong-Zakai correction. However, it is not easy to obtain the analytical form except some linear cases, for instance, the following Lemma \ref{Malianx}.
\end{remark}

\begin{lemma}[{\cite[Theorem 4.2]{ducan2000stochastic}}]\label{mal-to-fwick}
	If $ F\in \mathscr{L}^2_{\phi}(0, T)$ and $ \sup_{0\leq t\leq T}\mathbb{E}[|D^{\phi}_tF_t|^2]<\infty $, then for $ s,t\in[0,T] $, one has
	\begin{equation}\label{maltofbm}
	D^{\phi}_s\Big(\int_{0}^{t} F_u \diamond \mathrm{d} B^H_u\Big)= \int_{0}^{t} D^{\phi}_sF_u \diamond \mathrm{d} B^H_u+\int_{0}^{t}F_u\phi(s,u)\mathrm{d}u, \,  \forall s,t\in[0,T], \,a.s.
	\end{equation}
\end{lemma}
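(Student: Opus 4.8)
The plan is to follow the standard Malliavin-calculus route: verify the identity on Wick-elementary integrands by a direct algebraic computation, and then extend it to all admissible $F$ by density and closedness. Conceptually, (\ref{maltofbm}) is the fractional-Brownian analogue of the classical commutation relation $D_s\,\delta(u)=u_s+\delta(D_s u)$ between the Malliavin derivative and the divergence (Skorohod) operator, the pointwise term $u_s$ being replaced by the $\phi$-smoothed quantity $\int_0^t F_u\,\phi(s,u)\,\mathrm{d}u$ because the operator $D^\phi$ already carries the kernel $\phi$ inside its definition.

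\emph{Step 1: elementary integrands.} First I would take $F_u=\mathbf{1}_{(a,b]}(u)\,G$ with $[a,b]\subseteq[0,t]$ and $G$ a smooth cylindrical (Wick) functional, so that $\int_0^t F_u\diamond\mathrm{d}B^H_u=G\diamond(B^H_b-B^H_a)$. Using that $D^\phi_s$ is a derivation for the Wick product, $D^\phi_s(X\diamond Y)=(D^\phi_sX)\diamond Y+X\diamond(D^\phi_sY)$, and that $D^\phi_s(B^H_b-B^H_a)=\int_a^b\phi(s,u)\,\mathrm{d}u$ is deterministic (being the $\phi$-derivative of a first-chaos element), one obtains
\[ D^\phi_s\big(G\diamond(B^H_b-B^H_a)\big)=(D^\phi_sG)\diamond(B^H_b-B^H_a)+G\int_a^b\phi(s,u)\,\mathrm{d}u, \]
where $G\diamond c=Gc$ for the deterministic scalar $c=\int_a^b\phi(s,u)\,\mathrm{d}u$. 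Recognising the first term on the right as $\int_0^t(D^\phi_sF_u)\diamond\mathrm{d}B^H_u$ (with integrand $\mathbf{1}_{(a,b]}D^\phi_sG$) and the second as $\int_0^t F_u\,\phi(s,u)\,\mathrm{d}u$, this is precisely (\ref{maltofbm}). By linearity it then holds for all simple processes; alternatively one could verify it weakly, testing against smooth random variables and invoking the duality $\mathbb{E}[G\int_0^t F_u\diamond\mathrm{d}B^H_u]=\mathbb{E}[\int_0^t D^\phi_u G\cdot F_u\,\mathrm{d}u]$.

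\emph{Step 2: approximation.} For general $F$ with $F\in\mathscr{L}^2_\phi(0,T)$ and $\sup_{0\le t\le T}\mathbb{E}[|D^\phi_tF_t|^2]<\infty$ — conditions that place both $F$ and $D^\phi F$ in the domain of the FWIS integral — I would pick simple processes $F^{(n)}$ converging to $F$ simultaneously in $\mathscr{L}^2_\phi(0,T)$ and in the graph norm of $D^\phi$, and pass to the limit term by term in the identity of Step 1. The left-hand side converges by the $L^2$-continuity of the FWIS integral (controlled by a fractional It\^o isometry of the form $\|\int F\diamond\mathrm{d}B^H\|_{L^2}^2\lesssim\|F\|_\phi^2+\mathbb{E}\|D^\phi F\|_\phi^2$) followed by the closedness of $D^\phi$; the first term on the right converges by applying the same $L^2$-continuity to the integrand $D^\phi_sF$; and the last term converges because $u\mapsto\int_0^t(\cdot)\,\phi(s,u)\,\mathrm{d}u$ is continuous on the space carrying the $\phi$-inner product. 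Uniqueness of limits then yields (\ref{maltofbm}) in full generality.

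I expect Step 2 to be the crux: one must know that $D^\phi_sF$ is genuinely FWIS-integrable and that a single approximating sequence can be made to converge in all the relevant topologies at once, which requires a second-order Malliavin estimate and the right version of the fractional It\^o isometry — exactly what the two hypotheses on $F$ are designed to provide. Step 1 is, by contrast, purely formal once the derivation property of $D^\phi$ on Wick products and the value $D^\phi_sB^H_t=\int_0^t\phi(s,u)\,\mathrm{d}u$ are in hand.
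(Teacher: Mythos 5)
The paper itself gives no proof of this lemma: it is imported verbatim as Theorem 4.2 of Duncan--Hu--Pasik-Duncan \cite{ducan2000stochastic}, so there is no internal argument to compare against. Your outline is essentially the proof in that reference: establish the identity for Wick-elementary integrands $F_u=\mathbf{1}_{(a,b]}(u)\,G$ using the derivation property of $D^{\phi}$ over the Wick product together with $D^{\phi}_s(B^H_b-B^H_a)=\int_a^b\phi(s,u)\,\mathrm{d}u$, and then pass to general $F$ by density, invoking the fractional It\^o isometry (which controls $\mathbb{E}\big[\big(\int F\diamond\mathrm{d}B^H\big)^2\big]$ by $\mathbb{E}\|F\|_{\phi}^2$ plus a second-order $D^{\phi}$ term) and the closedness of $D^{\phi}$; your reading that the two hypotheses are exactly what make the limit passage legitimate is also accurate. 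The only refinement worth noting is that the isometry is an identity with a cross term $\mathbb{E}\int\!\!\int D^{\phi}_sF_t\,D^{\phi}_tF_s\,\mathrm{d}s\,\mathrm{d}t$ rather than the bound you wrote, but Cauchy--Schwarz reduces it to your form, so the argument stands.
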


\begin{lemma}\label{Malianx}
	If $ A_t, B_t $ and $ C_t $ are measurable and essentially bounded deterministic functions in $ t $, then, Eq. (\ref{mSDEline}) admits a unique solution. Moreover, let $X_t$ be the solution of (\ref{mSDEline}), then
	\begin{equation}\label{Mallinex}
	D^{\phi}_tX_t= X_t\int_{0}^{t}\phi(t,s)C_s\mathrm{d}s, \, \forall t\in[0,T], \, a.s.
	\end{equation}
\end{lemma}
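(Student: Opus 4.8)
The plan is to solve the linear equation (\ref{mSDEline}) explicitly and then read off its Malliavin derivative by the chain rule. Set
$$Y_t:=\int_0^t A_s\,\mathrm{d}s+\int_0^t B_s\circ\mathrm{d}W_s+\int_0^t C_s\circ\mathrm{d}B^H_s,\qquad X_t:=x_0\exp(Y_t).$$
Since $A,B,C$ are deterministic and essentially bounded, the correction term $\int_0^t D^\phi_sC_s\,\mathrm{d}s$ in (\ref{symtofwick}) vanishes, so $\int_0^t C_s\circ\mathrm{d}B^H_s=\int_0^t C_s\diamond\mathrm{d}B^H_s$ is the Wiener integral of a deterministic integrand against $B^H$; likewise $\int_0^t B_s\circ\mathrm{d}W_s=\int_0^t B_s\,\mathrm{d}W_s$. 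Consequently $Y_t$ is a Gaussian random variable with finite exponential moments, and both $X_t$ and $C_\cdot X_\cdot$ satisfy the integrability hypotheses of Lemmas \ref{sym-path-to-fwick} and \ref{mal-to-fwick} (a short Cauchy--Schwarz estimate using the boundedness of the coefficients and the integrability of $\phi$).

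For existence, I would apply the change-of-variables formula to $X_t=x_0\exp(Y_t)$; the ordinary chain rule is valid for the Stratonovich integral against $W$ and for the symmetric pathwise integral against $B^H$ (the latter because $H>1/2$ makes $B^H$ have zero quadratic variation and, by independence, zero cross-variation with $W$), yielding $\mathrm{d}X_t=A_tX_t\,\mathrm{d}t+B_tX_t\circ\mathrm{d}W_t+C_tX_t\circ\mathrm{d}B^H_t$. For uniqueness, given any solution $\widetilde X_t$, I would set $Z_t:=\widetilde X_t\exp(-Y_t)$ and use the product and chain rules to obtain $\mathrm{d}Z_t=0$, so $Z_t\equiv x_0$ and $\widetilde X_t=X_t$.

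For (\ref{Mallinex}), I would compute $D^\phi_rX_t$ for $r\le t$ and then put $r=t$. The term $\int_0^tA_s\,\mathrm{d}s$ is deterministic and $\int_0^tB_s\,\mathrm{d}W_s$ is $\sigma(W)$-measurable, hence independent of $B^H$, so $D^\phi_r$ annihilates both; for the fractional term, Lemma \ref{mal-to-fwick} with the deterministic integrand $F_u=C_u$ (so $D^\phi_rC_u=0$) gives $D^\phi_r\!\big(\int_0^tC_s\diamond\mathrm{d}B^H_s\big)=\int_0^tC_s\phi(r,s)\,\mathrm{d}s$. Hence $D^\phi_rY_t=\int_0^tC_s\phi(r,s)\,\mathrm{d}s$, and the chain rule for $D^\phi$ applied to the exponential gives $D^\phi_rX_t=x_0\exp(Y_t)\,D^\phi_rY_t=X_t\int_0^tC_s\phi(r,s)\,\mathrm{d}s$. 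Taking $r=t$ yields exactly (\ref{Mallinex}).

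The step I expect to be the main obstacle is the rigorous justification of the mixed change-of-variables formula in the existence argument: the $W$-integral is a probabilistic Stratonovich integral while the $B^H$-integral is a pathwise symmetric (Young) integral, and one has to verify that combining the two produces no additional correction term — this relies on the independence of $W$ and $B^H$ and the vanishing quadratic (cross-)variation of $B^H$ for $H>1/2$. Everything else — the integrability conditions, the $D^\phi$-chain rule on $\exp$, and the identification of the fractional Malliavin derivative via Lemma \ref{mal-to-fwick} — is routine once the explicit solution is in hand. (Differentiating the FWIS form of (\ref{mSDEline}) directly would be less attractive, since it generates the second-order term $D^\phi_rD^\phi_sX_s$.)
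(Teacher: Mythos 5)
Your proposal is correct, but it follows a genuinely different route from the paper. You solve the linear equation explicitly: since $A,B,C$ are deterministic, the Stratonovich and symmetric-pathwise integrals in $Y_t$ carry no correction (indeed $D^{\phi}_sC_s=0$ in (\ref{symtofwick})), $Y_t$ is Gaussian, $X_t=x_0\exp(Y_t)$ is the solution, and (\ref{Mallinex}) follows from the chain rule for $D^{\phi}$ together with Lemma \ref{mal-to-fwick} applied to the deterministic integrand $C$; this even yields $D^{\phi}_rX_t=X_t\int_0^t\phi(r,s)C_s\,\mathrm{d}s$ for all $r$, not only $r=t$. The paper never writes the explicit solution: it converts (\ref{mSDEline}) to FWIS form with the unknown correction $C_tD^{\phi}_tX_t$, differentiates the equation with Lemma \ref{mal-to-fwick} to obtain a linear equation for $D^{\phi}_rX_t$, and identifies the solution through the ansatz $z_t=\rho(r,t)X_t$, matching coefficients and invoking uniqueness of the mixed linear SDE (cited from \cite{mishura2012mixed}, which also covers the existence/uniqueness assertion of the lemma that you instead re-derive by the exponential/product-rule argument). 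What each buys: your argument is more elementary and transparent and makes the Gaussian structure visible, but its burden sits exactly where you flagged it — a rigorous mixed change-of-variables formula combining the Stratonovich $W$-integral with the Young $B^H$-integral (the paper handles the analogous issue via the mixed It\^o formula of \cite{sonmez2023mixed} only later, in Lemma \ref{itolinax}) and the chain rule for $D^{\phi}$ on $\exp(Y_t)$ with the attendant integrability checks; the paper's ansatz-plus-uniqueness argument avoids the explicit solution and the mixed chain rule at this stage, at the cost of the slightly delicate step of commuting $D^{\phi}_r$ with all three integrals and handling the second-order term $D^{\phi}_r(D^{\phi}_tX_t)$ consistently.
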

\begin{proof}
	A proof is provided in \ref{app-Malianx}.
\end{proof}
\begin{lemma}\label{itolinax}Let $ X_t $ be the solution of (\ref{mSDEline}). Suppose that $ F(t,x) $ is any twice differentiable function of two variables. If $ A_t, B_t $ and $ C_t $ satisfy the conditions of Lemma \ref{Malianx}, then
	\begin{align}\label{itoline}
	\mathrm{d}F(t,X_t)=&\Big(\frac{\partial F}{\partial t}(t,X_t)+\big(A_tX_t+\frac{1}{2}B^2_tX_t+\hat{C}_tX_t\big)\frac{\partial F}{\partial x}(t,X_t) \cr
	&+\big(\frac{1}{2}B^2_tX_t^2+\hat{C}_tX_t^2\big)\frac{\partial^2 F}{\partial x^2}(t,X_t)\Big)\mathrm{d}t+B_tX_t\frac{\partial F}{\partial x}(t,X_t)\mathrm{d}W_t\cr
	&+C_tX_t\frac{\partial F}{\partial x}(t,X_t) \diamond\mathrm{d}B^H_t,
	\end{align}
	where $ \hat{C}_t= C_t\int_{0}^{t}\phi(t,r)C_r\mathrm{d}r$,
	$\mathrm{d}W$ indicates the usual It\^o integral and $\diamond\mathrm{d}B^H$ denotes the FWIS integral.
\end{lemma}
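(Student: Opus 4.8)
The plan is to reduce (\ref{mSDEline}) to an equation driven by an It\^o integral in $W$ and a FWIS integral in $B^H$, and then apply a change-of-variables formula for such mixed processes. Existence and uniqueness is already Lemma \ref{Malianx}. For the Brownian part, since the diffusion coefficient $x\mapsto B_tx$ is linear, the Stratonovich--It\^o correction is the classical one, so that $B_tX_t\circ\mathrm{d}W_t=B_tX_t\,\mathrm{d}W_t+\tfrac12 B_t^2X_t\,\mathrm{d}t$. For the fractional part I would apply Lemma \ref{sym-path-to-fwick} to $F_s=C_sX_s$; this requires first checking that $CX\in\mathscr{L}^2_\phi(0,T)$ and $\sup_{t}\mathbb{E}|D^\phi_t(C_tX_t)|^2<\infty$, which follows from the explicit representation of the linear solution together with the essential boundedness of $A,B,C$ (Gaussian-type moment bounds on $X_t$). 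Lemma \ref{sym-path-to-fwick} then gives $\int_0^t C_sX_s\circ\mathrm{d}B^H_s=\int_0^t C_sX_s\diamond\mathrm{d}B^H_s+\int_0^t C_s D^\phi_sX_s\,\mathrm{d}s$, and by Lemma \ref{Malianx} one has $C_sD^\phi_sX_s=C_sX_s\int_0^s\phi(s,r)C_r\,\mathrm{d}r=\hat C_sX_s$. Hence $X_t$ satisfies $\mathrm{d}X_t=(A_t+\tfrac12 B_t^2+\hat C_t)X_t\,\mathrm{d}t+B_tX_t\,\mathrm{d}W_t+C_tX_t\diamond\mathrm{d}B^H_t$.

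Next I would invoke the It\^o formula for a process of the form $\mathrm{d}X_t=\mu_t\,\mathrm{d}t+\sigma_t\,\mathrm{d}W_t+\eta_t\diamond\mathrm{d}B^H_t$ with $W$ and $B^H$ independent (combining the classical It\^o formula for the $W$-part with the FWIS It\^o formula of \cite{ducan2000stochastic,hu2005stochastic} for the $B^H$-part). Since $W$ and $B^H$ are independent there is no mixed second-order term, and $B^H$ with $H>1/2$ contributes no $\tfrac12\partial_{xx}F\,\eta_t^2$ term either; instead it contributes the ``transfer'' term $\partial_{xx}F\,\eta_t\,D^\phi_tX_t\,\mathrm{d}t$. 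Thus
\[
\mathrm{d}F(t,X_t)=\frac{\partial F}{\partial t}\,\mathrm{d}t+\frac{\partial F}{\partial x}\,\mathrm{d}X_t+\frac12\frac{\partial^2 F}{\partial x^2}\sigma_t^2\,\mathrm{d}t+\frac{\partial^2 F}{\partial x^2}\,\eta_t\,D^\phi_tX_t\,\mathrm{d}t .
\]
With $\sigma_t=B_tX_t$, $\eta_t=C_tX_t$ and $D^\phi_tX_t=X_t\int_0^t\phi(t,s)C_s\,\mathrm{d}s$, so that $\eta_tD^\phi_tX_t=\hat C_tX_t^2$, substituting the expression for $\mathrm{d}X_t$ from the first step and collecting the $\mathrm{d}t$, $\mathrm{d}W_t$ and $\diamond\mathrm{d}B^H_t$ terms yields exactly (\ref{itoline}). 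Before this step one also checks the regularity needed for the FWIS integral $\int_0^t\partial_xF(t,X_t)C_tX_t\diamond\mathrm{d}B^H_t$ to be well defined, i.e.\ that $\partial_xF(t,X_t)C_tX_t$ and its $\phi$-derivative lie in the appropriate spaces; this again reduces to moment estimates on the explicit linear solution.

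The main obstacle is justifying the second-order FWIS term $\partial_{xx}F\,\eta_t\,D^\phi_tX_t\,\mathrm{d}t$ for the mixed process, should one not wish merely to quote it. I would obtain it from a Taylor expansion of $F$ along a partition $0=t_0<\cdots<t_n=t$: the $W$-increments produce the usual quadratic-variation term, while for the FWIS increments $\int_{t_i}^{t_{i+1}}\eta_s\diamond\mathrm{d}B^H_s$ one uses that they are centered and that, via Lemma \ref{mal-to-fwick} and the $\mathscr{L}^2_\phi$-product formula, $\mathbb{E}\big[\eta_{t_i}(\Delta_i X)^2\big]$ has leading part $\int_{t_i}^{t_{i+1}}\eta_s D^\phi_sX_s\,\mathrm{d}s$; the cross terms between the $W$- and $B^H$-increments vanish by independence and centering. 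Controlling these limits in $L^2$, using the essential boundedness of $A,B,C$ and the resulting moment bounds on $X_t$ and $D^\phi_tX_t$, is the technical heart of the argument, after which the algebraic bookkeeping leading to (\ref{itoline}) is routine. A complete proof can be placed in an appendix, in parallel with the proof of Lemma \ref{Malianx}.
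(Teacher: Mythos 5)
Your argument is correct, and it arrives at (\ref{itoline}) by the same two conversions the paper uses (Stratonovich--It\^o for the $W$-part, symmetric-pathwise to FWIS with the Malliavin correction for the $B^H$-part, plus the explicit formula of Lemma \ref{Malianx}), but it orders them differently and leans on a different change-of-variables theorem. The paper keeps the fractional term in symmetric-pathwise form, quotes the mixed It\^o formula of S\"onmez \cite{sonmez2023mixed} (where, since $H>1/2$, the fractional integral is Young-type and produces no second-order term), and only afterwards converts the resulting integrand $C_tX_t\,\partial_xF(t,X_t)$ to a FWIS integral via Lemma \ref{sym-path-to-fwick}; the second-order contribution $\hat C_tX_t^2\,\partial_{xx}F$ then appears through the Malliavin chain rule $D^\phi_t\big(C_tX_t\,\partial_xF(t,X_t)\big)=C_tD^\phi_tX_t\,\partial_xF+C_tX_tD^\phi_tX_t\,\partial_{xx}F$ together with Lemma \ref{Malianx}. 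You instead convert the equation for $X$ to the form $\mathrm{d}X_t=(A_t+\tfrac12B_t^2+\hat C_t)X_t\,\mathrm{d}t+B_tX_t\,\mathrm{d}W_t+C_tX_t\diamond\mathrm{d}B^H_t$ first and then invoke an It\^o formula for a mixed It\^o--FWIS process with the transfer term $\partial_{xx}F\,\eta_t D^\phi_tX_t\,\mathrm{d}t$. The bookkeeping then matches the paper's term by term, so the result is the same; the trade-off is where the heavy quotation sits. Be aware that \cite{ducan2000stochastic,hu2005stochastic} state the transfer-term formula for purely fractional (FWIS-driven) processes, so for your route you must either locate a genuinely mixed (Brownian plus FWIS) version or carry out the Taylor-expansion/$L^2$-limit argument you sketch -- that is the technical heart of your proof and is only outlined; the paper avoids exactly this by using the pathwise mixed formula of \cite{sonmez2023mixed}, at the cost of the extra integrand-level conversion and the regularity checks that the transformed integrand lies in $\mathscr{L}^2_\phi(0,T)$, which your moment-bound remarks would also need.
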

\begin{proof}
	A proof is provided in \ref{app-itolinax}.
\end{proof}
	
	\begin{theorem}\label{fplinax}
		Let $ X_t $ be the solution of (\ref{mSDEline}). If $ A_t, B_t $ and $ C_t $ satisfy the conditions of Lemma \ref{Malianx}. Then the memory-dependent PDEEs for PDFs $ p(x,t) $ of the solutions $ X_t $, satisfies the following equation
		\begin{align}\label{FPKline}
		\frac{\partial }{\partial t}p(x,t)=&-\frac{\partial }{\partial x}\big\{\big(A_tx+\frac{1}{2}B^2_tx+\hat{C}_tx\big)p(x,t)\big\}+\frac{\partial^2 }{\partial x^2}\big\{\big(\frac{1}{2}B^2_tx^2+\hat{C}_tx^2\big)p(x,t)\big\},
		\end{align}
		where $ \hat{C}_t= C_t\int_{0}^{t}\phi(t,r)C_r\mathrm{d}r$ and the initial condition is given by $p(x,0)=\delta(x-x_0)$.
	\end{theorem}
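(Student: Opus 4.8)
The plan is to derive the evolution equation for $p(x,t)$ directly from the It\^o-type formula in Lemma \ref{itolinax}, using the characteristic function (or, equivalently, testing against smooth compactly supported functions) as a bridge. First I would fix an arbitrary test function $F(x)$ that is twice continuously differentiable with compact support (so $F$ has no explicit $t$-dependence, $\partial F/\partial t = 0$), and apply Lemma \ref{itolinax} to $F(X_t)$. Integrating from $0$ to $t$ and taking expectations, the key structural fact is that the two stochastic integral terms have vanishing expectation: $\mathbb{E}\bigl[\int_0^t B_s X_s F'(X_s)\,\mathrm{d}W_s\bigr]=0$ because $\mathrm{d}W$ is a genuine It\^o integral against Brownian motion, and $\mathbb{E}\bigl[\int_0^t C_s X_s F'(X_s)\diamond\mathrm{d}B^H_s\bigr]=0$ precisely because the FWIS (Skorohod-type) integral has zero mean — this is the payoff of having converted the symmetric pathwise integral into the Wick--It\^o--Skorohod integral via Lemma \ref{sym-path-to-fwick}, as emphasized in the discussion preceding the Definition.

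The second step is to rewrite the resulting identity
\[
\mathbb{E}[F(X_t)] - F(x_0) = \int_0^t \mathbb{E}\Big[\big(A_sX_s+\tfrac12 B_s^2X_s+\hat C_sX_s\big)F'(X_s) + \big(\tfrac12 B_s^2X_s^2+\hat C_sX_s^2\big)F''(X_s)\Big]\,\mathrm{d}s
\]
in terms of the density: $\mathbb{E}[F(X_t)]=\int_{\mathbb{R}} F(x)p(x,t)\,\mathrm{d}x$, and similarly for the drift and diffusion contributions on the right-hand side, replacing $X_s$ by the integration variable $x$ against $p(x,s)$. Differentiating both sides in $t$ (legitimate since the $s$-integrand is continuous in $s$, using boundedness of $A_t,B_t,C_t$ and hence of $\hat C_t$ on $[0,T]$, together with moment bounds on $X_t$ coming from the explicit linear solution constructed in Lemma \ref{Malianx}) gives
\[
\frac{\mathrm{d}}{\mathrm{d}t}\int_{\mathbb{R}} F(x)p(x,t)\,\mathrm{d}x = \int_{\mathbb{R}}\Big[\big(A_tx+\tfrac12 B_t^2x+\hat C_tx\big)F'(x) + \big(\tfrac12 B_t^2x^2+\hat C_tx^2\big)F''(x)\Big]p(x,t)\,\mathrm{d}x.
\]
The third step is integration by parts: move the derivatives off $F$ and onto the coefficient-times-density products, using that $F$ has compact support to kill all boundary terms. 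Since $F$ was arbitrary in a dense class, the resulting identity holds in the distributional sense, which is exactly \eqref{FPKline}; the initial condition $p(x,0)=\delta(x-x_0)$ is immediate from $X_0=x_0$ deterministic.

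I expect the main obstacle to be the rigorous justification of the two technical points that the heuristic glosses over: (i) that the FWIS integral $\int_0^t C_sX_sF'(X_s)\diamond\mathrm{d}B^H_s$ is well-defined and has zero expectation, which requires checking that the integrand $C_sX_sF'(X_s)$ lies in the space $\mathscr{L}^2_\phi(0,T)$ — this needs the exponential moment or at least high moment bounds for the linear process $X_t$ from Lemma \ref{Malianx} and the (sub)exponential decay of $F$ or its compact support, and it is implicitly where the hypotheses of Lemma \ref{itolinax}/Lemma \ref{Malianx} are consumed; and (ii) the interchange of $\mathrm{d}/\mathrm{d}t$ with the spatial integral, i.e. the regularity of $t\mapsto p(\cdot,t)$, which one can sidestep by phrasing the conclusion weakly (the integrated identity against all test $F$ \emph{is} the statement that $p$ solves \eqref{FPKline} in the sense of distributions) rather than insisting on a classical solution. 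A secondary bookkeeping point is to confirm that $\hat C_t = C_t\int_0^t\phi(t,r)C_r\,\mathrm{d}r$ inherited from Lemma \ref{itolinax} is continuous (indeed bounded) on $[0,T]$, which follows from $|\phi(t,r)|=H(2H-1)|t-r|^{2H-2}$ being integrable in $r$ near $t$ since $2H-2>-1$ for $H>1/2$, so that the coefficient $Ht^{2H-1}$-type term in the statement is finite and the whole right-hand side of \eqref{FPKline} makes sense.
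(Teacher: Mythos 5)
Your proposal follows essentially the same route as the paper's proof: apply Lemma \ref{itolinax} to a test function depending only on $x$, take expectations so that the It\^o and FWIS stochastic integrals vanish (the zero-mean property of the Wick--It\^o--Skorohod integral being exactly the point of the conversion), rewrite the expectations as integrals against the density $p(x,t)$, integrate by parts, and conclude from the arbitrariness of the test function. The only cosmetic difference is that you kill boundary terms via compact support of the test function while the paper invokes $p(\pm\infty,t)=0$, and you spell out the integrability and differentiation-under-the-integral issues that the paper leaves implicit; the argument is the same.
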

	\begin{proof}
		A proof is provided in \ref{app-fplinax}.
	\end{proof}

	\begin{remark}\label{fplinax-multi}
		For simplicity, we only consider one-dimensional case in this paper, the conclusion can be generalized into higher dimensional cases, whose coefficients $ A_t, B_t, C_t $ should satisfy the commutativity conditions, that is, $ A_tB_t=B_tA_t, B_tC_t=C_tB_t, C_tA_t=A_tC_t$ where $ A_t, B_t, C_t $ are all time-dependent matrixes.
	\end{remark}

	\subsection{The memory-dependent PDEEs associated with the nonlinear SDEs}
	We now consider the nonlinear SDEs with slightly more general coefficients, and rewrite (\ref{mSDEs}) as following equivalent form
	\begin{equation}\label{mSDEscom}
	\mathrm{d}X_t=f(X_t)\mathrm{d}t+g(X_t)\circ \mathrm{d}W_t+h(X_t) \circ \mathrm{d}B^H_t.
	\end{equation}
	
	Unfortunately, it is currently not feasible to compute the analytic form of the Malliavin derivative \( D^{\phi}_tX_t \) for the nonlinear SDEs (\ref{mSDEscom}), and thus the corresponding memory-dependent PDEEs cannot be obtained using the method described in the previous subsection. Consequently, in this subsection, we will employ the rough path theory as an alternative framework to derive the memory-dependent PDEEs.
	
	The rough differential equations (RDEs) corresponding with SDEs (\ref{mSDEscom}) are 
	\begin{equation}\label{rde1}
	\mathrm{d}X_t^{\mathrm{R}}=f^{\mathrm{R}}(X_t^{\mathrm{R}}) \mathrm{d} t+ g^{\mathrm{R}}(X_t^{\mathrm{R}}) \mathrm{d}^{\mathrm{R}} W_t+ h^{\mathrm{R}} (X_t^{\mathrm{R}}) \mathrm{d}^{\mathrm{R}} B_t^H,
	\end{equation}
	where the integrals are understood in the rough path sense. If $ 1/2<H<1$, the integral is understood as Young's integration \cite{young1936inequality,zahle1998integration}; if $1/4<H\leq1/2$, it is understood in the rough path sense of Lyons \cite{lyons1998differential,coutin2002stochastic}, for instance, $\mathrm{d}^{\mathrm{R}}B^{1/2}$, here $B^{1/2}=W$.
	
	Next, we will show that $X$ are also the solutions to the RDEs (\ref{rde1}) with the coefficients
	\begin{eqnarray}\label{coeff}
	f^{\mathrm{R}}(\cdot)=f(\cdot), g^{\mathrm{R}}(\cdot)=g(\cdot), h^{\mathrm{R}}(\cdot)=h(\cdot).
	\end{eqnarray}
\begin{lemma}\label{sde-to-rde}
		Let $f, g, h $ are twice-differentiable and bounded functions, and suppose that $X=(X_t)_{t \in[0, T]}$ are the solutions to the SDEs (\ref{mSDEscom}),
		then $X$ are the solutions to the RDEs (\ref{rde1}) with coefficients given by (\ref{coeff}).
		In other words, the solutions $X^{\mathrm{R}}$ of (\ref{rde1}) and $X$ of (\ref{mSDEscom}) coincide $\mathbb{P}$-almost surely, that is, we have
		$$
		\mathbb{P}\left(X_t=X_t^{\mathrm{R}}, t \in[0, T]\right)=1 .
		$$
	\end{lemma}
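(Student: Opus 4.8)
The plan is to obtain the identification by transferring it to the rough-path side and invoking uniqueness there: if one can show that the Stratonovich/symmetric-pathwise solution $X$ of (\ref{mSDEscom}) is \emph{itself} an admissible solution of the rough differential equation (\ref{rde1}) with the coefficients (\ref{coeff}), then uniqueness of the RDE solution forces $X=X^{\mathrm{R}}$ $\mathbb{P}$-almost surely. The first step is to set up the driving signal as a \emph{mixed} rough path $\mathbf{Z}$ over the pair $(W,B^H)$: the Brownian motion $W$ is enhanced by its Stratonovich iterated integral (a genuine $p$-rough path for every $p>2$), while $B^H$, having finite $q$-variation for every $q>1/H$ with $1/H<2$ since $H>1/2$, requires no second-level enhancement of its own; the only iterated integrals that must be built are the two cross terms $\int W\otimes\mathrm{d}B^H$ and $\int B^H\otimes\mathrm{d}W$. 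Because $W$ and $B^H$ are independent, one can choose $p>2$ and $q\in(1/H,2)$ with $1/p+1/q>1$ (possible precisely when $H>1/2$), so these cross integrals exist as Young integrals \cite{young1936inequality,zahle1998integration}; verifying Chen's relation and the corresponding $p$-variation bounds then shows $\mathbf{Z}$ is a bona fide (weakly geometric) rough path, realised a.s.\ as the limit of the canonical lifts of piecewise-linear approximations of $(W,B^H)$ \cite{coutin2002stochastic}.

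With $\mathbf{Z}$ in hand, the hypotheses that $f,g,h$ are twice differentiable and bounded put us in the scope of the universal limit theorem \cite{lyons1998differential}, so (\ref{rde1}) admits a unique solution $X^{\mathrm{R}}$, a controlled rough path relative to $\mathbf{Z}$. It then remains to check that $X$ solves (\ref{rde1}), and this rests on two consistency identities. First, for the Brownian component: $g(X)$ is the composition of a $C^2$ function with the controlled path $X$, hence is again controlled by $W$, and the rough integral $\int_0^{\cdot} g(X_s)\,\mathrm{d}^{\mathrm{R}}W_s$ coincides a.s.\ with the Stratonovich integral $\int_0^{\cdot} g(X_s)\circ\mathrm{d}W_s$ --- the classical agreement of the rough integral with the Stratonovich integral under the Stratonovich lift. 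Second, for the fractional component: since $X$ inherits the variation regularity of the rougher driver $W$, the path $h(X)$ has finite $p$-variation for $p$ arbitrarily close to $2$, so for $H>1/2$ the Young integrability condition $1/p+1/q>1$ is met for a suitable $q\in(1/H,2)$, the integral $\int_0^{\cdot} h(X_s)\,\mathrm{d}^{\mathrm{R}}B^H_s$ is well defined, and it equals the symmetric pathwise integral $\int_0^{\cdot} h(X_s)\circ\mathrm{d}B^H_s$ in the sense of \cite{dai1996ito,zahle1998integration,lin1995stochastic}, both being limits of the same (symmetrised) Riemann--Stieltjes sums. Substituting these two identities into the integral form of (\ref{mSDEscom}) exhibits $X$ as a solution of (\ref{rde1}), and uniqueness of $X^{\mathrm{R}}$ yields the claim.

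I expect the principal difficulty to lie in the careful bookkeeping of this \emph{mixed} regularity: showing that the cross iterated integrals between $W$ and $B^H$ genuinely assemble into a rough path (the correct analytic and algebraic estimates for $\mathbf{Z}$), and that $X$, although only a $p$-rough path with $p>2$ in the $W$-direction, nonetheless yields an integrand $h(X)$ regular enough both for the $B^H$-Young integral and for its coincidence with the symmetric-pathwise integral --- this is exactly where the hypothesis $H>1/2$ enters. A secondary but necessary point is to confirm that the SDE (\ref{mSDEscom}) is itself well posed in the prescribed Stratonovich-plus-symmetric-pathwise sense (cf.\ \cite{biagini2008stochastic,mishura2008stochastic,zahle1998integration,lin1995stochastic}), so that ``the solution $X$'' is unambiguous before the comparison with $X^{\mathrm{R}}$ is carried out.
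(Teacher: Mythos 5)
Your proposal is sound in outline, but it takes a noticeably different route from the paper. The paper's proof is short and citation-driven: it first rewrites the Stratonovich integral against $W$ as an It\^o integral with the Wong--Zakai correction, obtaining a mixed It\^o--fractional equation, and then invokes \cite[Theorem 4]{neuenkirch2015relation} to identify the mixed solution with the rough solution, the drift correction $-\frac12 g\,\partial_x g$ cancelling the earlier correction so that the RDE coefficients are exactly $(f,g,h)$; a subsequence argument upgrades convergence in probability to the almost-sure statement. You instead work directly on the rough-path side: you build the joint lift $\mathbf{Z}$ of $(W,B^H)$ with Stratonovich enhancement of $W$ and Young cross-integrals (possible precisely because $1/p+1/q>1$ can be arranged for $H>1/2$, cf.\ \cite{young1936inequality,coutin2002stochastic}), get existence and uniqueness of $X^{\mathrm R}$ from the universal limit theorem \cite{lyons1998differential}, and then verify that the pathwise solution $X$ of (\ref{mSDEscom}) solves the RDE by matching the rough integral against the Stratonovich lift with the Stratonovich integral and the Young/rough integral against $B^H$ with the symmetric pathwise integral of \cite{dai1996ito,zahle1998integration,lin1995stochastic}. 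In effect you re-prove (in sketch) the content that the paper outsources to Neuenkirch--Shalaiko; this buys a self-contained argument, while the paper's route buys brevity and avoids the delicate consistency step, which is exactly where your sketch is thinnest: the assertion that $g(X)$ is ``controlled by $W$'' should really be ``controlled by the joint driver $\mathbf{Z}$'' (the increment $h(X_s)B^H_{s,t}$ is only $H$-H\"older, so the cross integrals in $\mathbf{Z}$ are what make the compensated Riemann sums for $\int g(X)\,\mathrm{d}^{\mathrm R}W$ converge), and the agreement of that rough integral with the Stratonovich integral for a solution of a \emph{mixed} equation is precisely the nontrivial identity the cited theorem supplies. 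A shared caveat, not a flaw of yours specifically: uniqueness via the universal limit theorem strictly wants a bit more than ``twice differentiable and bounded'' (Lip-$\gamma$ with $\gamma>p>2$), a looseness already present in the paper's own hypotheses.
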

	\begin{proof}
		A proof is provided in \ref{app-sde-to-rde}.
	\end{proof}
	
Using Lemma \ref{sde-to-rde}, we can reformulate the SDEs (\ref{mSDEscom}) in the framework of rough path theory with the following RDEs
\begin{equation}\label{RDEvec}
\mathrm{d}X_t^{\mathrm{R}}= V_0(X^{\mathrm{R}}_t)\mathrm{d}t+\sum_{i=1}^{d}V_i(X_t^{\mathrm{R}})\mathrm{d}^{\mathrm{R}}B^{H,i}_t,
\end{equation}
where the $ V_i $'s are $ C^{\infty} $ bounded vector fields on $ \mathbb{R}^n $ and $(B^{H,i})_{i=0,1,\cdots,d} $ are one-dimensional FBMs with Hurst index $ H \geq 1/2$ for $ i=1,\cdots,d $. 
\begin{definition}
	A smooth vector field $ V:\mathbb{R}^n\rightarrow\mathbb{R}^n $ is simply a smooth map $ x\mapsto (v_1(x),\cdots,v_n(x)) $, defined a differential operator acting on the smooth function $ f:\mathbb{R}^n\rightarrow\mathbb{R} $ as 
	$$  (Vf)(x)=\sum_{i=1}^{n}v_i(x)\frac{\partial f}{\partial x_i}.  $$ 
	Assume that the Lie brackets $ [V_i, V_j] = V_iV_j-V_jV_i, 0 \leq i, j \leq d $, all vanish.
\end{definition}

\begin{remark}
	As for the RDEs (\ref{rde1}), it is the special case of RDEs (\ref{RDEvec}) by taking $n=1, d=2, V_0(\cdot)=f(\cdot), V_1(\cdot)=g(\cdot), B^{H,1}=W, V_2(\cdot)=h(\cdot), B^{H,2}=B^H$ for $t\in [0,T]$.
\end{remark}

For $ i = 0,1,\dots, d $, let us denote by $ (e^{tV_i})_{t\in\mathbb{R}} $ the (deterministic) flow associated with the ordinary differential equations
$$ \frac{\mathrm{d}x_t}{\mathrm{d}t}=V_i(x_t). $$
\begin{lemma}\label{flowuniq}
	The flow $ \Phi_t $ associated with the RDEs (\ref{RDEvec}) is given by the formula
	\begin{equation*}
	\Phi_t=e^{tV_0}\circ e^{B_t^{H,1}V_1}\circ \cdots \circ e^{B_t^{H,d}V_d},
	\end{equation*}
	where $ \phi \circ \psi $ is the composite of two maps $ \phi$, $\psi $ of $ \mathbb{R}^n $ into itself.
\end{lemma}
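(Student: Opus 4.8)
This identity is a Doss--Sussmann-type representation, valid precisely because the vanishing of all Lie brackets $[V_i,V_j]$ forces the deterministic flows $e^{sV_i}$ to commute pairwise and, more importantly, makes every $V_j$ invariant under every flow: $(e^{sV_i})_*V_j=V_j$ for all $i,j$ and all $s\in\mathbb{R}$ (the infinitesimal version being $\tfrac{\mathrm d}{\mathrm ds}(e^{-sV_i})_*V_j=(e^{-sV_i})_*[V_i,V_j]=0$, and boundedness of the $V_i$ guarantees the flows are complete, i.e.\ defined for all $s$). The plan is: (i) prove a purely deterministic differentiation identity for the composition of flows; (ii) feed it into the change-of-variables formula for the driving path to check that the right-hand side of the asserted formula solves the RDE (\ref{RDEvec}); (iii) conclude by uniqueness of the solution flow of (\ref{RDEvec}), which holds since the $V_i$ are $C^{\infty}$ and bounded (Young regime for $1/2<H<1$, level-$2$ geometric rough paths for $1/4<H\le 1/2$).

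For step (i), set $\phi(t,a_1,\dots,a_d):=e^{tV_0}\circ e^{a_1V_1}\circ\cdots\circ e^{a_dV_d}$, a map $\mathbb{R}^n\to\mathbb{R}^n$ depending smoothly on the parameters $(t,a_1,\dots,a_d)$ with derivatives bounded on compacts. Using $\tfrac{\partial}{\partial s}e^{sV}(y)=V(e^{sV}(y))$ and the chain rule, $\partial_t\phi=V_0\circ\phi$ directly, while for $i\ge 1$ the partial $\partial_{a_i}\phi$ equals $D\big(e^{tV_0}\circ\cdots\circ e^{a_{i-1}V_{i-1}}\big)$ applied to $V_i$ at the intermediate point; since each of $e^{tV_0},\dots,e^{a_{i-1}V_{i-1}}$ preserves $V_i$ by the pushforward invariance above, this collapses to $V_i$ evaluated at $\phi$. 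Hence
\[
\partial_t\phi=V_0\circ\phi,\qquad \partial_{a_i}\phi=V_i\circ\phi\quad (i=1,\dots,d),
\]
which is the classical statement that for commuting vector fields the flow of $\dot x_t=\sum_{i}\dot\gamma^i(t)V_i(x_t)$ is the ordered composition $\prod_i e^{\gamma^i(t)V_i}$.

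For step (ii), fix a realization of $\mathbf B^H$ and write $Z_t=(t,B_t^{H,1},\dots,B_t^{H,d})$, which stays in a compact set of $\mathbb{R}^{d+1}$ on $[0,T]$, where $\phi$ and its derivatives are bounded. Applying the change-of-variables rule for the associated Young integral ($1/2<H<1$) or geometric rough path ($1/4<H\le 1/2$) --- which obeys ordinary calculus, with no It\^o correction --- to $\Psi_t:=\phi(Z_t)=e^{tV_0}\circ e^{B_t^{H,1}V_1}\circ\cdots\circ e^{B_t^{H,d}V_d}$ gives
\[
\mathrm d\Psi_t=\partial_t\phi(Z_t)\,\mathrm dt+\sum_{i=1}^d\partial_{a_i}\phi(Z_t)\,\mathrm d^{\mathrm R}B_t^{H,i}=V_0(\Psi_t)\,\mathrm dt+\sum_{i=1}^d V_i(\Psi_t)\,\mathrm d^{\mathrm R}B_t^{H,i},
\]
using step (i). Since $B_0^{H,i}=0$ we have $\Psi_0=\mathrm{id}$, so $t\mapsto\Psi_t(x)$ solves (\ref{RDEvec}) from every initial point $x$; by uniqueness of that flow, $\Phi_t=\Psi_t$, which is the asserted formula.

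The main obstacle is making step (ii) rigorous: one must know that the smooth map $\phi$ is admissible in the change-of-variables/solution theory for the relevant range of $H$. In the regime $1/2<H<1$ of primary interest here this is comparatively mild and follows from continuity of the Young integral and of Young ODEs. A way to sidestep chain-rule technicalities entirely is the Wong--Zakai route: replace $Z$ by its piecewise-linear (or mollified) approximations $Z^{(n)}$, for which $X^{(n)}=\phi(Z^{(n)})$ is exactly the elementary ODE computation of step (i), and then pass to the limit using continuity of the It\^o--Lyons solution map on the one hand and $\phi(Z^{(n)})\to\phi(Z)$ uniformly on $[0,T]$ on the other. A minor secondary point, already noted, is completeness of the flows $e^{sV_i}$, which is exactly why the hypothesis that the $V_i$ are bounded is used.
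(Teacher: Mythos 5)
Your proposal is correct and follows essentially the same route as the paper: both verify, via the correction-free change-of-variables (Young/rough) chain rule and the commutativity of the vector fields, that the ordered composition of flows solves the RDE (\ref{RDEvec}), and then conclude by pathwise uniqueness. The only cosmetic difference is that you compute all the partial derivatives of the parametrized composition at once via the pushforward invariance $(e^{sV_i})_*V_j=V_j$, whereas the paper builds the composition up one flow at a time; the underlying mechanism is identical.
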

	
	\begin{theorem}\label{fpcomm}
		Let $ X=(X_t)_{t\in[0,T]} $ be the solutions of the SDEs (\ref{mSDEscom}), if $ f, g$ and $ h$ are bounded smooth functions and satisfied the commutativity conditions. Then the memory-dependent PDEE for PDFs $ p(x,t) $ of the solutions $ X_t $, satisfies the following equation
		\begin{align}\label{FPKcase33}
		\frac{\partial }{\partial t}p(x,t)=&-\frac{\partial }{\partial x}\big\{\big(f(x)+\frac{1}{2}g(x)g'(x)+Ht^{2H-1}h(x)h'(x)\big)p(x,t)\big\}\cr
		&+\frac{\partial^2 }{\partial x^2}\big\{\big(\frac{1}{2}g^2(x)+Ht^{2H-1}h^2(x)\big)p(x,t)\big\},
		\end{align}
		where the initial condition is given by $p(x,0)=\delta(x-x_0)$.
	\end{theorem}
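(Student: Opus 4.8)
The plan is to prove the weak (distributional) form of (\ref{FPKcase33}): it suffices to show that for every test function $F\in C_c^\infty(\mathbb{R})$ one has
$$\frac{\mathrm{d}}{\mathrm{d}t}\mathbb{E}[F(X_t)]=\mathbb{E}\Big[\big(f+\tfrac12 gg'+Ht^{2H-1}hh'\big)(X_t)\,F'(X_t)+\big(\tfrac12 g^2+Ht^{2H-1}h^2\big)(X_t)\,F''(X_t)\Big],$$
because pairing the right-hand side of (\ref{FPKcase33}) with $F$ and integrating by parts twice produces exactly this identity, while $p(\cdot,t)$ is the law of $X_t$ and $p(\cdot,0)=\delta(\cdot-x_0)$. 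Thus the entire problem is reduced to evaluating $\frac{\mathrm{d}}{\mathrm{d}t}\mathbb{E}[F(X_t)]$.

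First I would use Lemma~\ref{sde-to-rde} to identify $X$ with the rough path solution, and then Lemma~\ref{flowuniq} — specialised to $n=1$, $d=2$, $V_0=f$, $V_1=g$, $B^{H,1}=W$, $V_2=h$, $B^{H,2}=B^H$ — to obtain the explicit flow factorisation $X_t=e^{tf}\circ e^{W_t g}\circ e^{B_t^H h}(x_0)$, which, thanks to the commutativity hypothesis, may be written with the factors in any order. Since $H\ge1/2$ (so that the quadratic covariation $[W,B^H]$ and the quadratic variation $[B^H]$ both vanish), the mixed Stratonovich--Young calculus obeys the ordinary chain rule, giving $\mathrm{d}F(X_t)=F'(X_t)f(X_t)\,\mathrm{d}t+F'(X_t)g(X_t)\circ\mathrm{d}W_t+F'(X_t)h(X_t)\circ\mathrm{d}B_t^H$. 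Converting the Stratonovich integral against $W$ to Itô form produces the classical correction $\tfrac12\big(g^2F''+gg'F'\big)(X_t)\,\mathrm{d}t$ plus a mean-zero martingale, which reproduces the Brownian part of the generator as in (\ref{FPK-Stro}). After taking expectations, the only term still to be identified is $\mathbb{E}\big[\int_0^t F'(X_s)h(X_s)\circ\mathrm{d}B_s^H\big]$.

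This term is the crux. Applying Lemma~\ref{sym-path-to-fwick} to $(F'h)(X_\cdot)$ rewrites $\int_0^t (F'h)(X_s)\circ\mathrm{d}B_s^H$ as the FWIS integral $\int_0^t (F'h)(X_s)\diamond\mathrm{d}B_s^H$ plus the correction $\int_0^t D_s^\phi\big[(F'h)(X_s)\big]\,\mathrm{d}s$; since the FWIS integral has zero expectation, only the correction survives. The Malliavin chain rule gives $D_s^\phi\big[(F'h)(X_s)\big]=(F''h+F'h')(X_s)\,D_s^\phi X_s$, so everything hinges on $D_s^\phi X_s$, and here the flow representation pays off: writing $X_s=e^{B_s^H h}(Y_s)$ with $Y_s=e^{sf}\circ e^{W_s g}(x_0)$ independent of $B^H$ (commutativity again), the dependence of $X_s$ on the driver $B^H$ is only through the scalar $B_s^H$, and $\partial_b\,e^{bh}(\cdot)=h(e^{bh}(\cdot))$ yields $D_r^\phi X_s=h(X_s)\mathbf{1}_{[0,s]}(r)$ in the pathwise sense, whence $D_s^\phi X_s=h(X_s)\int_0^s\phi(s,r)\,\mathrm{d}r=Hs^{2H-1}h(X_s)$ by $\int_0^s H(2H-1)(s-r)^{2H-2}\,\mathrm{d}r=Hs^{2H-1}$. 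This is the exact nonlinear analogue of Lemma~\ref{Malianx} (with $h(x)$ in place of $C_tx$), and it is precisely the source of the fractional-power time factor $Ht^{2H-1}$. Substituting back gives $\mathbb{E}\big[\int_0^t (F'h)(X_s)\circ\mathrm{d}B_s^H\big]=\int_0^t Hs^{2H-1}\,\mathbb{E}\big[(F''h^2+F'hh')(X_s)\big]\,\mathrm{d}s$; differentiating in $t$ yields the FBM contribution $Ht^{2H-1}\,\mathbb{E}[(F''h^2+F'hh')(X_t)]$, and collecting the $f$-term, the Brownian correction, and this FBM contribution reproduces the weak form displayed above, hence (\ref{FPKcase33}).

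The hard part will be the rigorous justification of the two ``chain rule'' steps rather than any single computation: (i) that the mixed Stratonovich--Young first-order calculus applies to $F(X_t)$ and that the pathwise integral it produces is the symmetric pathwise integral to which Lemma~\ref{sym-path-to-fwick} applies; and (ii) that $(F'h)(X_\cdot)$ belongs to $\mathscr{L}^2_\phi(0,T)$ with enough Malliavin regularity — for which the boundedness and smoothness of $f,g,h$, the flow bounds coming from Lemma~\ref{flowuniq}, and Lemma~\ref{mal-to-fwick} are the relevant inputs — so that the Malliavin chain rule and the identity $D_s^\phi X_s=Hs^{2H-1}h(X_s)$ hold. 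The commutativity hypothesis is essential at every stage: it is what makes the flow factorise and hence reduces the $B^H$-dependence of $X_s$ to the single Gaussian variable $B_s^H$. One may alternatively package (i)--(ii) as a nonlinear counterpart of Lemma~\ref{itolinax} and then conclude exactly as in the proof of Theorem~\ref{fplinax}.
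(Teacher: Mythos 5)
Your proposal is correct, and it shares the paper's overall skeleton: identify $X$ with the rough-path solution via Lemma \ref{sde-to-rde}, use the commutative flow factorisation of Lemma \ref{flowuniq}, obtain the generator identity for $\frac{\mathrm{d}}{\mathrm{d}t}\mathbb{E}[F(X_t)]$, and close with the same weak-form integration-by-parts step as in Theorem \ref{fplinax}. Where you genuinely differ is in the mechanism producing that generator identity (the paper's Eq. (\ref{phipde})). The paper obtains (\ref{phipde}) directly from the flow representation, i.e. from the fact that $X_t$ is a deterministic function of $(t,W_t,B^H_t)$, so the $Ht^{2H-1}$ factor arises from the time-derivative of the Gaussian law of $B^H_t$ (variance $t^{2H}$); no FWIS calculus enters the nonlinear case, and Section 2.3 explicitly sets the Malliavin route aside as ``not feasible'' for nonlinear SDEs. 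You instead port the linear-case machinery of Section 2.2: the mixed first-order chain rule, Lemma \ref{sym-path-to-fwick} to trade the symmetric integral for a zero-mean FWIS integral plus the correction $\int_0^t D^\phi_s\big[(F'h)(X_s)\big]\mathrm{d}s$, and then the commutative flow to show that $X_s$ depends on $B^H$ only through the scalar $B^H_s$, whence $D^\phi_s X_s=Hs^{2H-1}h(X_s)$ — the exact nonlinear analogue of Lemma \ref{Malianx}, consistent with $\hat C_t$ in Theorem \ref{fplinax} when $h(x)=Cx$. This alternative is instructive: it unifies Theorems \ref{fplinax} and \ref{fpcomm} under a single mechanism, makes the origin of $Ht^{2H-1}$ transparent, and shows that under the commutativity hypothesis the Malliavin derivative is in fact computable, sharpening the paper's remark. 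The price is the extra bookkeeping you flag yourself — membership of $(F'h)(X_\cdot)$ in $\mathscr{L}^2_\phi(0,T)$, the Malliavin chain rule in the mixed $W$/$B^H$ setting (Lemma \ref{sym-path-to-fwick} is stated for FBM functionals, so one must condition on $W$ or extend to the product space), and the validity of the first-order Stratonovich--Young calculus for $F(X_t)$ — which the paper's direct Gaussian-flow computation avoids; these points are at the same level of rigour as the paper's own linear-case arguments, so I see no genuine gap.
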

	\begin{proof}
		A proof is provided in \ref{app-fpcomm}.
	\end{proof}

	If $ f(x)=g(x)\equiv0 $, consider
	\begin{equation}\label{SDEscomxh}
	\mathrm{d}X_t= h(X_t)\circ \mathrm{d}B^H_t,\qquad X_0=x_0,
	\end{equation}
	then we have the following corollary.
	\begin{corollary}\label{fpcomm-onlyh}
		Let $ X=(X_t)_{t\in[0,T]} $ be the solutions of (\ref{SDEscomxh}), if $ h(x) $ is $ C^{\infty} $ bounded function. Then the memory-dependent PDEE for PDFs $ p(x,t) $ of the solutions $ X_t $, satisfies the following equation
		\begin{align}\label{FPKcase33h}
		\frac{\partial }{\partial t}p(x,t)=-\frac{\partial }{\partial x}\big\{Ht^{2H-1}h'(x)h(x)p(x,t)\big\}
		+\frac{\partial^2 }{\partial x^2}\big\{Ht^{2H-1}h^2(x)p(x,t)\big\},
		\end{align}
		which has the exact solution
		\begin{align*}
		p(x,t)=C_1\frac{1}{\sqrt{\frac{1}{2}t^{2H}h^2(x)}}\exp\bigg\{-\frac{\hat{x}^2(x)}{2t^{2H}}\bigg\},
		\end{align*}
		where $ C_1 $ is the normalization constant, $ \hat{x}(x)=\int_{x_0}^{x}\frac{1}{h(x)}\mathrm{d}x $, and the initial condition is given by $p(x,0)=\delta(x-x_0)$.
	\end{corollary}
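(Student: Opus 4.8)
The plan is to read off Corollary~\ref{fpcomm-onlyh} from Theorem~\ref{fpcomm} and then to identify the closed-form density, either by a change of variables that linearises the dynamics or by direct substitution into (\ref{FPKcase33h}). First, the equation itself: (\ref{FPKcase33h}) is exactly (\ref{FPKcase33}) specialised to $f\equiv g\equiv 0$. Since $h$ is $C^{\infty}$ and bounded, the hypotheses of Theorem~\ref{fpcomm} are met, and the commutativity conditions hold trivially because only one vector field (namely $h$) is nonzero; hence this part is immediate.

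Next, the reduction to fractional Brownian motion. Put $\hat x(x)=\int_{x_0}^{x}h(y)^{-1}\,\mathrm dy$ and $\hat X_t=\hat x(X_t)$. By Lemma~\ref{sde-to-rde}, the solution of (\ref{SDEscomxh}) coincides almost surely with the solution of the associated RDE, and by Lemma~\ref{flowuniq} with $n=1$, $d=1$, $V_0=0$, $V_1=h$, this solution is $X_t=e^{B^H_t h}(x_0)$, i.e.\ the deterministic flow of $\dot y=h(y)$ evaluated at the random time $B^H_t$ and started from $x_0$. Since the flow $y(\cdot)$ of $\dot y=h(y)$ with $y(0)=x_0$ satisfies $\hat x(y(s))=s$, we get $\hat X_t=\hat x(X_t)=B^H_t$, a centred Gaussian variable with variance $t^{2H}$.

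Transporting the law of $\hat X_t=B^H_t$ back through the map $x\mapsto\hat x(x)$, whose derivative is $\hat x'(x)=h(x)^{-1}$, yields
\begin{equation*}
p(x,t)=\frac{1}{\sqrt{2\pi t^{2H}}}\,\exp\bigl(-\tfrac{\hat x(x)^2}{2t^{2H}}\bigr)\,\frac{1}{|h(x)|},
\end{equation*}
which is precisely the announced formula up to the normalising constant $C_1$; as $t\downarrow 0$ this Gaussian concentrates at $\hat x=0$, i.e.\ at $x=x_0$, recovering $p(x,0)=\delta(x-x_0)$. Alternatively, one can verify directly that this $p$ solves (\ref{FPKcase33h}): using $\partial_t(t^{2H})=2Ht^{2H-1}$, $\hat x'=h^{-1}$ and $\hat x''=-h'h^{-2}$, a routine computation shows that the combined drift and diffusion terms on the right-hand side cancel against $\partial_t p$.

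The only delicate points are (i) ensuring $\hat x$ is a genuine, strictly monotone change of variables, which requires $h$ not to vanish on the part of the state space under consideration — otherwise the statement should be read locally, on the maximal interval around $x_0$ on which $h\neq 0$; and (ii) making the limit $t\downarrow 0$ rigorous as weak convergence of probability measures rather than pointwise convergence of densities. The bookkeeping in the alternative direct PDE check is longer but entirely mechanical, so I expect the change-of-variables route via Lemmas~\ref{sde-to-rde} and~\ref{flowuniq} to be the cleanest.
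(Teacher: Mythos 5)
Your proposal is correct. For the evolution equation itself you do exactly what the paper intends: Corollary \ref{fpcomm-onlyh} is Theorem \ref{fpcomm} specialised to $f\equiv g\equiv 0$, where the commutativity hypothesis is vacuous because only the single vector field $h$ survives; the paper gives no separate proof of the corollary (equation (\ref{FPKcase33h}) simply reproduces the form of (\ref{FPK-onlyfbm})), so there is nothing further to match on that part. Where you go beyond the paper is the exact-solution formula, which the paper states without derivation: you obtain it from the flow representation of Lemma \ref{flowuniq}, $X_t=e^{B^H_t h}(x_0)$, so that $\hat x(X_t)=B^H_t\sim N(0,t^{2H})$ and the density of $X_t$ is the Gaussian pushed forward through $\hat x$, namely $p(x,t)=(2\pi t^{2H})^{-1/2}\,|h(x)|^{-1}\exp\{-\hat x(x)^2/(2t^{2H})\}$; since $\sqrt{\tfrac12 t^{2H}h^2(x)}=t^H|h(x)|/\sqrt2$, this is the stated formula with the specific constant $C_1=1/(2\sqrt{\pi})$. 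Your direct-substitution check also works: writing $p(x,t)=q(\hat x(x),t)/h(x)$ with $q$ the centred Gaussian of variance $t^{2H}$, one computes $-\partial_x\{h'hp\}+\partial_x^2\{h^2p\}=q_{yy}(\hat x(x),t)/h(x)$, so (\ref{FPKcase33h}) reduces on both sides to $\partial_t q=Ht^{2H-1}q_{yy}$. The caveats you flag are the right ones and worth keeping: $h$ must not vanish on the interval around $x_0$ under consideration so that $\hat x$ is a genuine strictly monotone change of variables (and $\sqrt{h^2}=|h|$ takes care of the sign), and the initial condition $p(x,0)=\delta(x-x_0)$ is recovered only as a weak limit as $t\downarrow 0$.
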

	\begin{remark}
		When $A_t, B_t $ and $C_t $ are constants in the linear SDE (\ref{mSDEline}), which is a special case of (\ref{mSDEscom}).
	\end{remark}
	
	\section{The LDG method for memory-dependent PDEEs}
	In this section, we recall the discontinuous finite element space and apply the semi-discrete LDG scheme to solve the transient PDF solutions of the memory-dependent PDEEs.
	
	\subsection{The discontinuous finite element space}
	Let $ a = x_{\frac{1}{2}}< x_{\frac{3}{2}}<\cdots< x_{N+\frac{1}{2}}= b $ be a partition of $ I=[a,b] $. Denote $ I_j=[x_{j-\frac{1}{2}},x_{j+\frac{1}{2}}]$. The mesh size is denoted by $ h_j=x_{j+\frac{1}{2}}-x_{j-\frac{1}{2}}$, with $ h = \max_{1\leq j\leq N} h_j $ being the maximum mesh size. We denote by $ u_{j+\frac{1}{2}}^{-} $ and $ u_{j+\frac{1}{2}}^{+} $ the left and right limits values of the function $ u $ at the discontinuity point $ x_{j+\frac{1}{2}} $, respectively.
	
	The discontinuous finite element space is defined as
	\begin{equation*}
	V_h^k=\big\{v\mid v\in P^k(I_j),x\in I_j,\,j=1,\cdots ,N\big\}.
	\end{equation*}
	where $ P^k(I_j) $ denotes the set of polynomials, defined on the cell $ I_j $, of the degree up to $ k $. Note that functions in $ V_h^k $ might have discontinuities on an element interface.
	
	\subsection{The semi-discrete LDG method}
	In this section, we apply the LDG method for the memory-dependent PDEEs	
    \begin{equation}\label{ldg1}
    	\left\{
	\begin{aligned}
	\frac{\partial }{\partial t}p(x,t)&=-\frac{\partial }{\partial x}D^{(1)}(x,t)p(x,t)+
	\frac{\partial^2 }{\partial x^2}D^{(2)}(x,t)p(x,t),\cr
	p(x,0)&=\delta(x-x_0).
	\end{aligned}
	\right.
	\end{equation}
	
	To establish the LDG method, setting $ v=p_x $, $ w=av $ and $ a=D^{(2)} $, we rewrite the equations~\eqref{ldg1} as the following equivalent first-order systems
	\begin{subnumcases}{}
	\quad \ \ \,\:p_t=w_x+\psi(x,t,p,v),\label{ldg2-a}\\
	\:v(x,t)=p_x(x,t),\label{ldg2-b}\\
	w(x,t)=a(x,t)v(x,t),\label{ldg2-c}\\
	\;\!p(x,0)=\delta(x-x_0), \label{ldg2-d}
	\end{subnumcases}
	where
	\begin{equation*}
	\left\{
	\begin{aligned}
	\psi(x,t,p,v)&=[(D^{(2)}(x,t))_x-D^{(1)}(x,t)]v+[(D^{(2)}(x,t))_{xx}-D^{(1)}(x,t)_x]p,\cr
	a(x,t)&=D^{(2)}(x,t).
	\end{aligned}
	\right.
	\end{equation*}
	
	The LDG method is to find the approximations $ p_h,v_h,w_h $ to the exact solutions $ p,v,w $, respectively, such that $ p_h,v_h,w_h \in V_h^k $. In order to determine the approximate solutions $ p_h,v_h,w_h $,  we firstly multiply (\ref{ldg2-a}), (\ref{ldg2-b}), (\ref{ldg2-c}) and (\ref{ldg2-d}) with arbitrary smooth functions $ r,z,o,q\in V_h^k $, respectively, then integrating over $ I_j $ with $ j = 1,2,\cdots, N $ and integration by parts in (\ref{ldg2-a}) and (\ref{ldg2-b}), we can get the weak form
    \begin{equation}
		\left\{
		\begin{aligned}
	\int_{I_j} p_t(x,t)r(x)\mathrm{d}x&=-\int_{I_j}w(x,t)r_x(x)\mathrm{d}x+\int_{I_j}\psi(x,t,p(x,t),v(x,t))r(x)\mathrm{d}x+[w(\cdot,t)r]\big|_{x_{j-\frac{1}{2}}^{+}}^{x_{j+\frac{1}{2}}^{-}},\cr
	\int_{I_j}v(x,t)z(x)\mathrm{d}x&=-\int_{I_j}p(x,t)z_x(x)\mathrm{d}x+[p(\cdot,t)z]\big|_{x_{j-\frac{1}{2}}^{+}}^{x_{j+\frac{1}{2}}^{-}},\cr
	\int_{I_j}w(x,t)o(x)\mathrm{d}x&=\int_{I_j}a(x,t)v(x,t)o(x)\mathrm{d}x,\cr
	\int_{I_j}u(x,0)q(x)\mathrm{d}x&=\int_{I_j}\delta(x-x_0)q(x)\mathrm{d}x.
	\end{aligned}
	\right.
	\end{equation}
	
	Next, in the above weak formulation, we replace the smooth functions $ r,z,o,q $ with the test functions $ r_h,z_h,o_h,q_h\in V_h^k $, respectively, and the exact functions $ p,v,w $ with the approximations $ p_h,v_h,w_h $. Since the functions in $ V_h^k $ might have discontinuities on an element interface, we must also replace the boundary terms $ p(x_{j+\frac{1}{2}},t) $ and $ w(x_{j+\frac{1}{2}},t) $ with the numerical fluxes $ \hat{p}_{j+\frac{1}{2}}(t) $ and $ \hat{w}_{j+\frac{1}{2}}(t) $, respectively, which are defined by
	\begin{align*}
	\hat{p}_{j+\frac{1}{2}}(t):=p_h(x_{j+\frac{1}{2}}^{-},t),\quad
	\hat{w}_{j+\frac{1}{2}}(t):=w_h(x_{j+\frac{1}{2}}^{+},t),
	\end{align*}
	for $ j = 0,1,\cdots, N $.
	
	Thus, the approximate solutions given by the LDG method is defined as the solutions of the following weak formulation
   \begin{subnumcases}{}
	\int_{I_j}(p_h)_t(x,t)r_h(x)\mathrm{d}x=-\int_{I_j}w_h(x,t)(r_h)_x(x)\mathrm{d}x+\int_{I_j}\psi(x,t,p_h(x,t),v_h(x,t))r_h(x)\mathrm{d}x\notag
	\\
	\qquad\qquad\qquad\qquad\qquad\:\:+\hat{w}_{j+\frac{1}{2}}(t)r_h(x_{j+\frac{1}{2}}^{-})-\hat{w}_{j-\frac{1}{2}}(t)r_h(x_{j-\frac{1}{2}}^{+}),\label{ldg3-a}\\
	\ \ \ \int_{I_j}v_h(x,t)z_h(x)\mathrm{d}x=-\int_{I_j}p_h(x,t)(z_h)_x(x)\mathrm{d}x+\hat{p}_{j+\frac{1}{2}}(t)z_h(x_{j+\frac{1}{2}}^{-})-\hat{p}_{j-\frac{1}{2}}(t)z_h(x_{j-\frac{1}{2}}^{+}),\label{ldg3-b}\\
	\ \,\:	\int_{I_j}w_h(x,t)o_h(x)\mathrm{d}x=\int_{I_j}a(x,t)v_h(x,t)o_h(x)\mathrm{d}x,\label{ldg3-c}\\
	\ \,\,\,
	\int_{I_j}p_h(x,0)q_h(x)\mathrm{d}x=\int_{I_j}\delta(x-x_0)q_h(x)\mathrm{d}x\label{ldg3-d},
	\end{subnumcases}
	for any $ r_h,z_h,o_h,q_h\in V_h^k $.
	
	As a semi-discrete scheme, the algorithm is not difficult for numerical implementation. In fact, given $ p_h $, one firstly uses (\ref{ldg3-b}) to locally solve for $ v_h $, then uses (\ref{ldg3-c}) to locally solve $ w_h $, and finally uses (\ref{ldg3-a}) to locally solve for the update of $ p_h $.

	\section{Numerical results}
	To verify the accuracy and performance of the proposed schemes, we present several numerical experiments. The simulation parameters of the MC are chosen as that the number of sample paths is $ 10^6 $, and the time interval is $ 0.004 $, 
	we adopt $k = 2$ in the spatial discretization \eqref{ldg3-a}-\eqref{ldg3-d} in the LDG simulation.
	
	\subsection{The nonlinear Langevin equation excited by GWN}
	Taking $f(t,x)=ax-bx^3, g(t,x)=\sigma, h(t,x) \equiv 0$ in (\ref{mSDEs}), we understand the nonlinear Langevin equation excited by GWN as a Stratonovich SDE
	\begin{equation}\label{example-1}
	\mathrm{d}X_t= (a X_t-b X_t^3)\mathrm{d}t+\sigma \circ\mathrm{d}W_t,\qquad X_0=x_0.
	\end{equation}
	Then, by (\ref{FPK-Stro}), the corresponding FPK equation of (\ref{example-1}) is
	\begin{equation}\label{fpk-1}
	\frac{\partial }{\partial t}p(x,t)=-\frac{\partial }{\partial x}\{(a x-b x^3)p(x,t)\}+
	\frac{1}{2}\sigma^2\frac{\partial^2 }{\partial x^2}p(x,t),
	\end{equation}
	with the initial condition $p(x, 0) = \delta (x-x_0)$. The exact stationary solution can be obtained 
	\begin{equation}\label{prob-exac-1}
	p(x)=C_1\exp\bigg\{\frac{2a x^2-b x^4}{2\sigma^2}\bigg\},
	\end{equation}
	where $ C_1 $ is the normalization constant. When $ b=0 $, $ a<0 $, the exact transient solution is 
	\begin{equation}\label{prob-exac-2}
	p(x)=\sqrt{\frac{-2a}{2\pi\sigma^2(1-e^{2a t})}}\exp\bigg\{\frac{2a x^2}{2\sigma^2(1-e^{2a t})}\bigg\},
	\end{equation}
	
	In the case of $b\ne0$, the parameters are chosen as $ \Delta t=0.001, R=[-3,3], \Delta x=0.05$ and $a=b=\sigma=1, x_0=0$ to obtain the LDG solution of the FPK equation (\ref{fpk-1}). Since no exact solution exists for (\ref{fpk-1}), MC simulations are conducted to validate the effectiveness of the LDG solution.
	
	Fig. \ref{fig3-3} (a) compares the PDF from MC simulations and the LDG method at different times $ t $. Fig. \ref{fig3-3} (b) shows the stationary PDF using the exact solution (\ref{prob-exac-1}) and LDG solution.
	
	In the case of $b=0$, the parameters are chosen as $ \Delta t=0.001, R=[-6,6], \Delta x=0.05$ and $a=-1, \sigma=1, x_0=0$. The comparison between the exact and the LDG solutions presented in Fig.~\ref{fig3-3} (c). Fig.\ref{fig3-3} (d) shows the stationary PDF.
	
	At different times $t$, which are correspond to different states of nonlinear Langevin equation, we observe excellent agreement between the LDG solutions and the exact solutions.
	
	\begin{figure}[t!]
		\centering
		\setcounter {subfigure} {0} (a){\includegraphics[scale = 0.5]{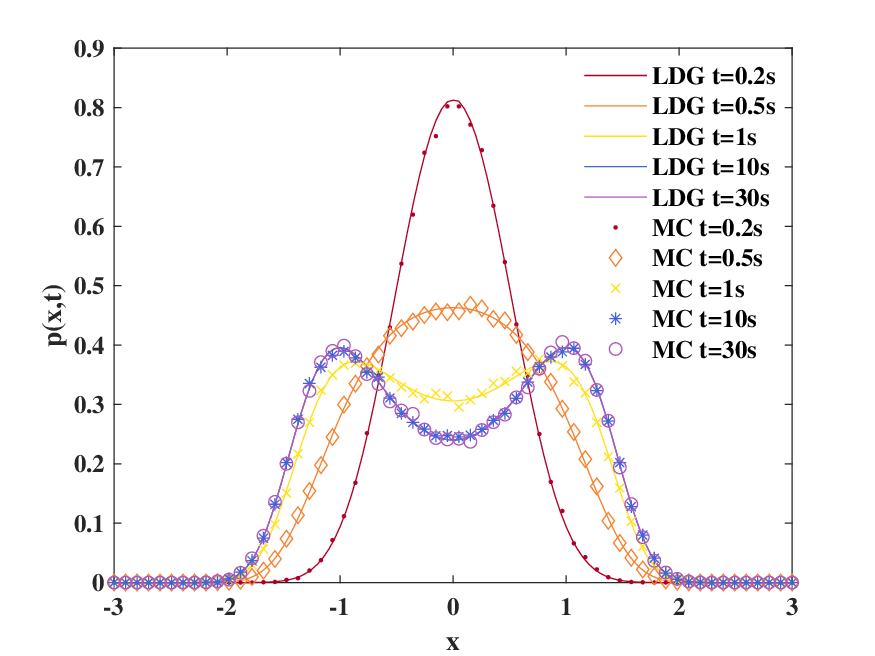}}
		\setcounter {subfigure} {0} (b){\includegraphics[scale = 0.5]{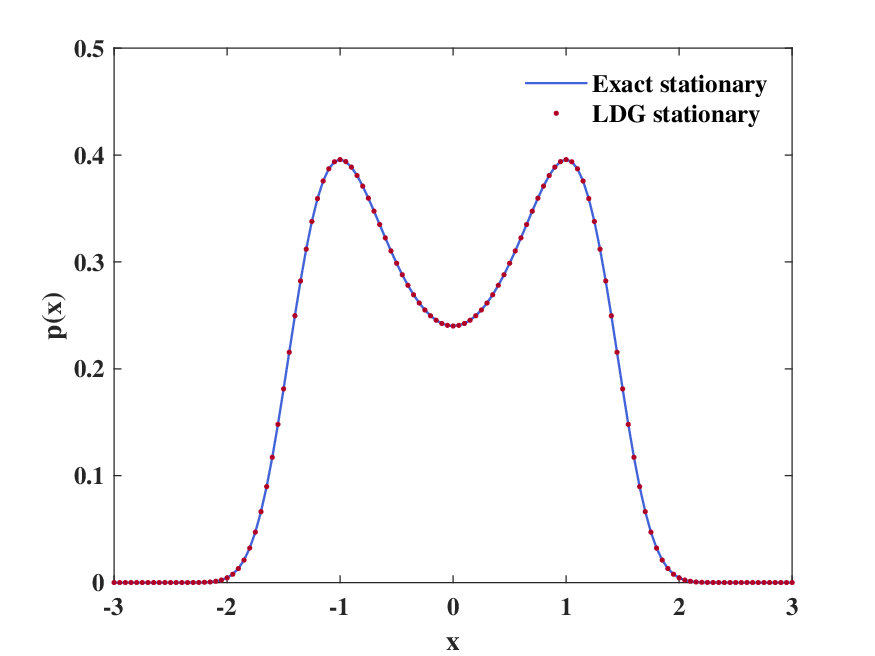}}
		\setcounter {subfigure} {0} (c){\includegraphics[scale = 0.5]{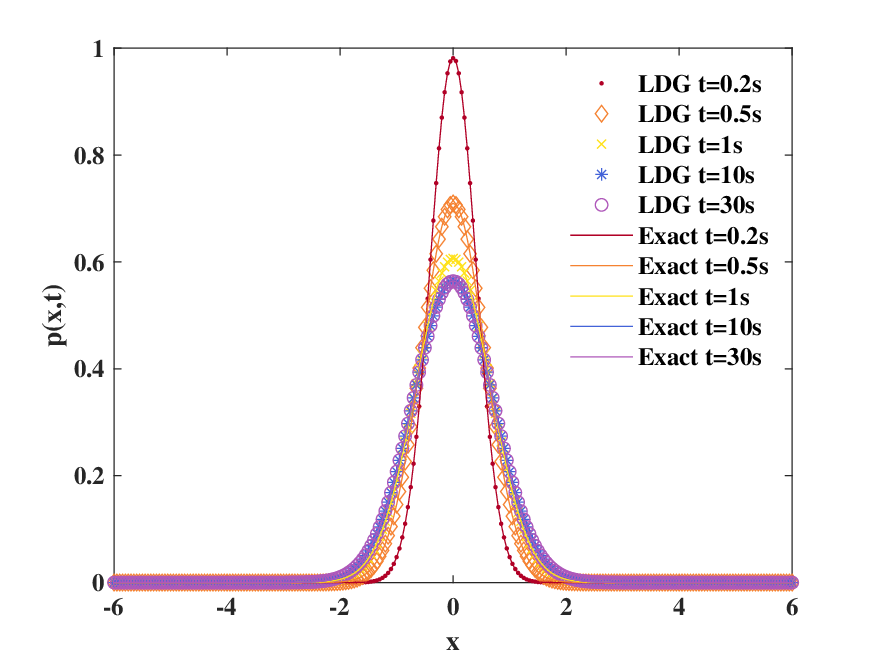}}
		\setcounter {subfigure} {0} (d){\includegraphics[scale = 0.5]{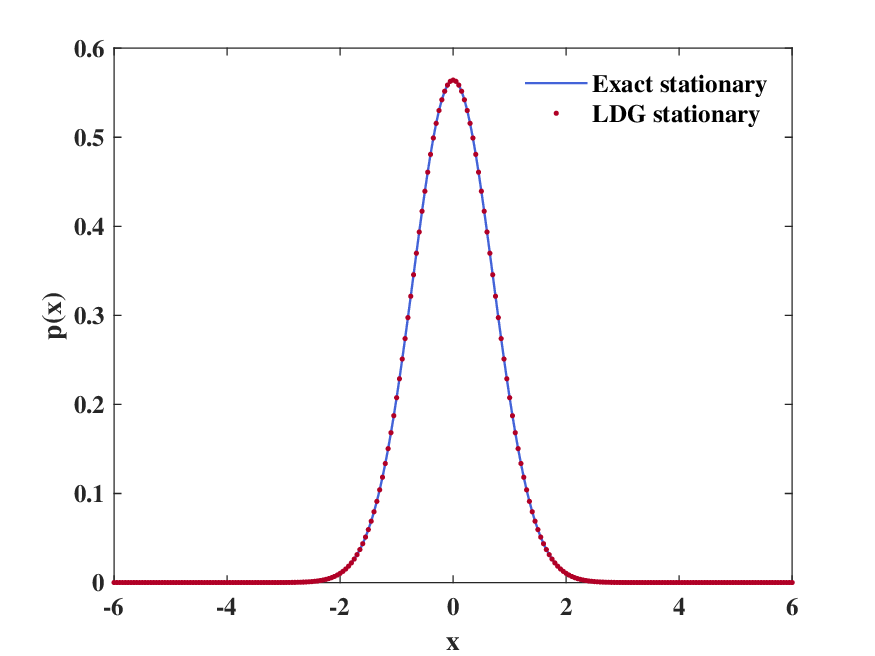}}
		\caption{PDF evolution and stationary PDF of the FPK equation (\ref{prob-exac-1}) where $\sigma=1$ and $a=b=1$ in (a) and (b); $a=-1, b=0$ in (c) and (d).}
		\label{fig3-3}
	\end{figure}
	To further quantify the accuracy of the LDG solutions relative to the exact solution , we compare the error magnitudes obtained using the MC, LDG, FD and PI methods, which is one of the most commonly used techniques for solving the FPK equations for the Markovian SDEs. In this study, FD refers to the first-order central finite difference approximation. We now introduce the $ L^2 $ and $ L^{\infty} $ errors as follows:
	\begin{align}\label{r1}
	L^2= \bigg(\int_{\Omega}\big|\hat{P}(x,t)-\tilde{P}(x,t)\big|^2\mathrm{d}x\bigg)^{1/2},\quad L^{\infty}=\sup_{x\in\Omega}|\hat{P}(x,t)-\tilde{P}(x,t)|,
	\end{align}
	where $ \hat{P} $ denotes the value of the LDG, PI, FD, or MC solutions and $ \tilde{P} $ denotes the value of the exact  solutions.
	\begin{table}[h]
		\caption{ $ L^2 $ error and $ L^{\infty} $ error between the LDG/PI/FD/MC solution and the exact stationary solution of (\ref{fpk-1}) with $ a = b = \sigma = 1, \Delta x=0.05$.}\label{tab1-2}
		\begin{tabular*}{\textwidth}{@{\extracolsep\fill}lcccccc}
			\toprule%
			Method  & LDG & PI & FD & MC \\
			\midrule
			$ L^2 $  & 1.4514E-08 & 1.0556E-03  & 1.5274E-03  & 5.8165E-03  \\
			$ L^{\infty}$ & 1.7421E-08  & 9.9435E-04 & 1.3731E-03 & 2.2976E-02 \\
			\bottomrule
		\end{tabular*}
	\end{table}
	
    We compare the LDG, PI, FD, and MC solutions against the exact stationary solution (\ref{prob-exac-1}) and present the $ L^2 $ and $ L^{\infty} $ errors with the same parameter $\Delta x=0.05$ in Table \ref{tab1-2} and $\Delta x=0.05,\Delta t=0.001$ in Table \ref{tab1}. 
    
In Table \ref{tab1-2}, the results demonstrate that the LDG method achieves higher precision compared to other methods for solving the stationary solution of the FPK equation (\ref{fpk-1}).
    
	\begin{table}[h]
		\caption{$ L^2 $ error and $ L^{\infty} $ error between the LDG/PI/FD solutions and the exact solutions of (\ref{fpk-1}) with $ a = -1, b = 0, \sigma = 1, \Delta x=0.05,\Delta t=0.001 $.}\label{tab1}
		\begin{tabular*}{\textwidth}{@{\extracolsep\fill}lcccccc}
			\toprule%
			& \multicolumn{5}{@{}c@{}}{$ L^2 $ error} \\\cmidrule{2-6}%
			Method & $t=0.2$ s  & $t=0.5$ s& $t=1$ s& $t=10$ s& $t=30$ s\\
			\midrule
			LDG  & 1.5809E-04 & 1.0023E-04 & 5.4830E-05 &  4.1307E-10 & 4.1281E-10  \\
			PI  & 3.2464E-03 & 2.0672E-03 & 1.2952E-03 & 6.7113E-04 & 6.7113E-04    \\
			FD  & 1.4866E-03 & 7.2319E-04 & 5.6137E-04 & 5.4947E-04 & 5.4947E-04    \\
			\midrule
			& \multicolumn{5}{@{}c@{}}{$ L^{\infty} $ error} \\\cmidrule{2-6}%
			Method & $t=0.2$ s & $t=0.5$ s& $t=1$ s& $t=10$ s& $t=30$ s\\
			\midrule
			LDG  & 1.6339E-04 & 7.4697E-05 & 4.3296E-05 & 3.5873E-10 & 3.5377E-10  \\
			PI  & 4.4376E-03 & 2.3956E-03 & 1.3866E-03 & 6.9258E-04 & 6.9258E-04    \\
			FD  & 2.1248E-03 & 8.6159E-04 & 5.8607E-04 & 5.2962E-04 & 5.2962E-04    \\
			\bottomrule
		\end{tabular*}
	\end{table}
    
    In Table \ref{tab1}, the $ L^2 $ and $ L^{\infty} $ errors at different times $t$ are presented. As the nonlinear Langevin equation approaches steady state (e.g., at $t=10$ s and $t=30$ s), the improved regularity of the equation results in significantly better accuracy for the LDG  solutions, surpassing other numerical methods by up to six orders of magnitude. Conversely, even during transient states when the FPK equation exhibits limited regularity, the LDG method maintains higher precision compared to the PI and FD methods. These results confirm the efficiency of the LDG method.
    
    To proceed, taking $f(t,x)\equiv 0, g(t,x)=\sqrt{a}t^bx, h(t,x) \equiv 0$ in (\ref{mSDEs}), we understand the nonlinear Langevin equation excited by GWN as a Stratonovich SDE
	\begin{equation}\label{example1-3}
	\mathrm{d}X_t= \sqrt{a}t^{b}X_t \circ\mathrm{d}W_t,\qquad X_0=x_0.
	\end{equation}
	By (\ref{FPK-Stro}), the corresponding FPK equation of (\ref{example1-3})
	\begin{equation}\label{ldg1133}
	\frac{\partial }{\partial t}p(x,t)=-\frac{\partial }{\partial x}\big\{\frac{1}{2}at^{2b}xp(x,t)\big\}+
	\frac{\partial^2 }{\partial x^2}\big\{\frac{1}{2}at^{2b}x^2p(x,t)\big\},
	\end{equation}
	with the initial condition $ p (x, 0) = \delta (x-x_0) $ and the exact solution of (\ref{ldg1133}) is
	\begin{equation}\label{exact1133}
	p(x,t)=\sqrt{\frac{2b+1}{2\pi at^{2b+1}x^2}}\exp\bigg\{-\frac{(2b+1)(\ln x-\ln x_0)^2}{2at^{2b+1}}\bigg\}.
	\end{equation}
	
	For the numerical calculation of the LDG  solutions of the FPK equation (\ref{ldg1133}), we select the parameters $ \Delta x=0.067, R=[0,8], \Delta t=0.00167 $ and $a=0.02, b=0.3, x_0=2$. And the FD method employs the same parameters.
	
	The LDG solutions at different times $t$ are compared with the exact solutions in Fig. \ref{fig3-4} (a). The absolute errors between the exact and LDG solutions are shown in Fig. \ref{fig3-4} (b). Figures \ref{fig3-4} (c) and (d) display the PDF evolution surfaces for the LDG and exact solutions, respectively. From Fig. \ref{fig3-4} (b), it is evident that the numerical scheme performs well across most regions.
	
	\begin{figure}[t!]
		\centering
		\setcounter {subfigure} {0} (a){\includegraphics[scale = 0.50]{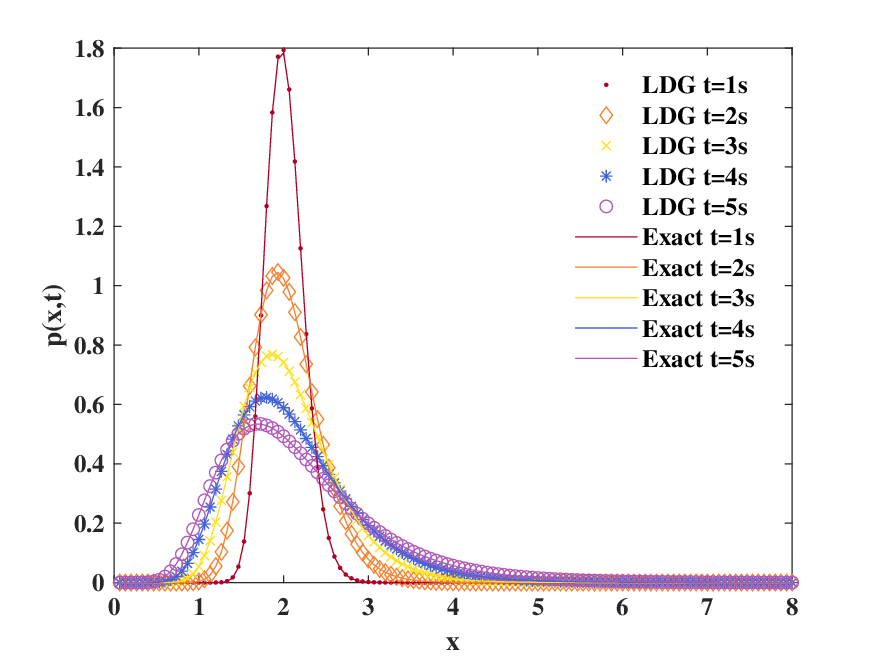}}
		\setcounter {subfigure} {0} (b){\includegraphics[scale = 0.50]{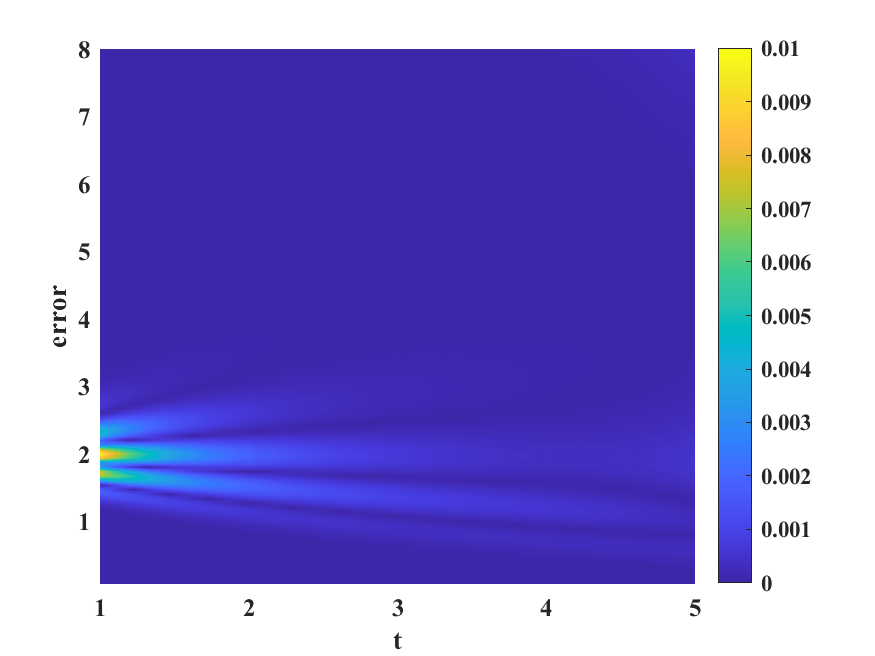}}
		\setcounter {subfigure} {0} (c){\includegraphics[scale = 0.50]{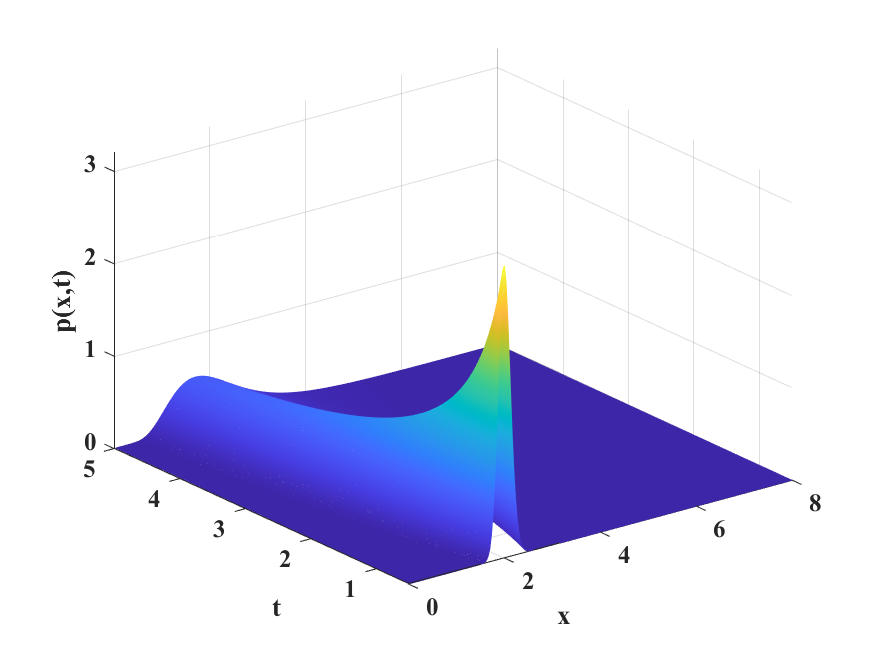}}
		\setcounter {subfigure} {0} (d){\includegraphics[scale = 0.50]{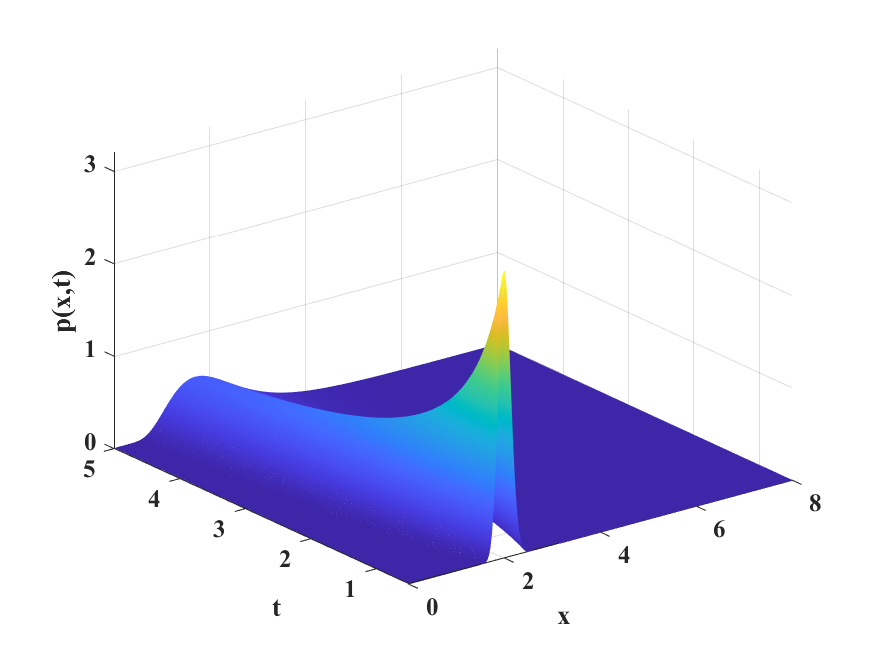}}
		\caption{PDF evolution (a) of the FPK equation (\ref{ldg1133}), the absolute errors (b) between the exact solutions and the LDG solutions, and PDF evolution surface of LDG solutions (c) and the exact solutions (d) with $a=0.02, b=0.3, x_0=2$.}
		\label{fig3-4}
	\end{figure}

	In Table \ref{tab2}, the $ L^2 $ and $ L^\infty $ errors at different times $t$ are presented. The results indicate that the LDG method provides significantly more accurate transient solutions compared to the FD method, validating its effectiveness for solving time-dependent FPK equations.
	
	\begin{table}[h]
		\caption{$ L^2 $ error and $ L^{\infty} $ error between the LDG/FD solutions and the exact solutions of (\ref{ldg1133}) with $ a = 0.02, b = 0.3, \Delta x=0.067,\Delta t=0.00167$.}\label{tab2}
		\begin{tabular*}{\textwidth}{@{\extracolsep\fill}lcccccc}
			\toprule%
			& \multicolumn{5}{@{}c@{}}{$ L^2 $ error} \\\cmidrule{2-6}%
			Method & $t=1$ s & $t=2$ s& $t=3$ s& $t=4$ s& $t=5$ s\\
			\midrule
			LDG  & 5.4249E-03 & 1.4214E-03 & 6.9663E-04 &  4.4969E-04 & 5.6859E-04  \\
			FD  & 1.6890E-02 & 4.5168E-03 & 2.2446E-03 &  1.4485E-03 & 1.2037E-03    \\
			\midrule
			& \multicolumn{5}{@{}c@{}}{$ L^{\infty} $ error} \\\cmidrule{2-6}%
			Method & $t=1$ s & $t=2$ s& $t=3$ s& $t=4$ s& $t=5$ s\\
			\midrule
			LDG & 1.6890E-03 & 1.9306E-03 & 9.3514E-04 &  5.7940E-04 & 5.2386E-04   \\
			FD  & 3.1451E-02 & 6.0668E-03 & 2.9421E-03 & 1.8503E-03 & 1.2410E-03    \\
			\bottomrule
		\end{tabular*}
	\end{table}

	\subsection{The linear Langevin equation excited by combined FGN and GWN}
	In this subsection, we first consider the linear Langevin equation excited by combined multiplicative FGN and GWN. Specifically, the functions are defined as $f(t,x)=ax, g(t,x)=bx$ and $ h(t,x)=cx$. Consequently, the linear SDEs (\ref{mSDEline}) can be expressed as follows
	\begin{equation}\label{example1-9}
	\mathrm{d}X_t= aX_t\mathrm{d}t+bX_t\circ\mathrm{d}W_t+cX_t\circ\mathrm{d}B^H_t,\qquad X_0=x_0,
	\end{equation}
	where $ a, b $ and $ c $ are constants. By Theorem \ref{fplinax}, the PDFs of (\ref{example1-9}) satify
 the following memory-dependent PDEE
	\begin{align}\label{ldg1139}
	\frac{\partial }{\partial t}p(x,t)=&-\frac{\partial }{\partial x}\big\{\big(ax+\frac{1}{2}b^2x+Ht^{2H-1}c^2x\big)p(x,t)\big\}
	\cr
	&+\frac{\partial^2 }{\partial x^2}\big\{\big(\frac{1}{2}b^2x^2
	+Ht^{2H-1}c^2x^2\big)p(x,t)\big\},
	\end{align}
	with the initial condition $ p (x, 0) = \delta (x-x_0) $. When $ b=0 $, the exact solution is
	\begin{equation*}
	p(x,t)=\frac{1}{x\sqrt{2\pi c^2t^{2H}}}\exp\bigg\{-\frac{\big(\ln(x/x_0)-x_0-at\big)^2}{2c^2t^{2H}}\bigg\}.
	\end{equation*}
	
	We select the parameters $ \Delta x=0.1, R=[0,6], \Delta t=0.0005  $ and $ a=-0.5, b=0.25, c=0.25, H=0.8, x_0=2$ to obtain the LDG  solutions of Eq. (\ref{ldg1139}) and compared those with MC solutions in Fig. \ref{fig3-9} (a).
	
	For the case where $b=0$, we choose the parameters $ \Delta x=0.05, R=[0,6], \Delta t=0.0001 $ and $ a=-0.5, c=0.5 $ to demonstrate that the LDG solutions closely match the exact solutions of Eq. (\ref{ldg1139}), as shown in Fig. \ref{fig3-9} (b).
	
	\begin{figure}[t!]
		\centering
		\setcounter {subfigure} {0} (a){\includegraphics[scale = 0.5]{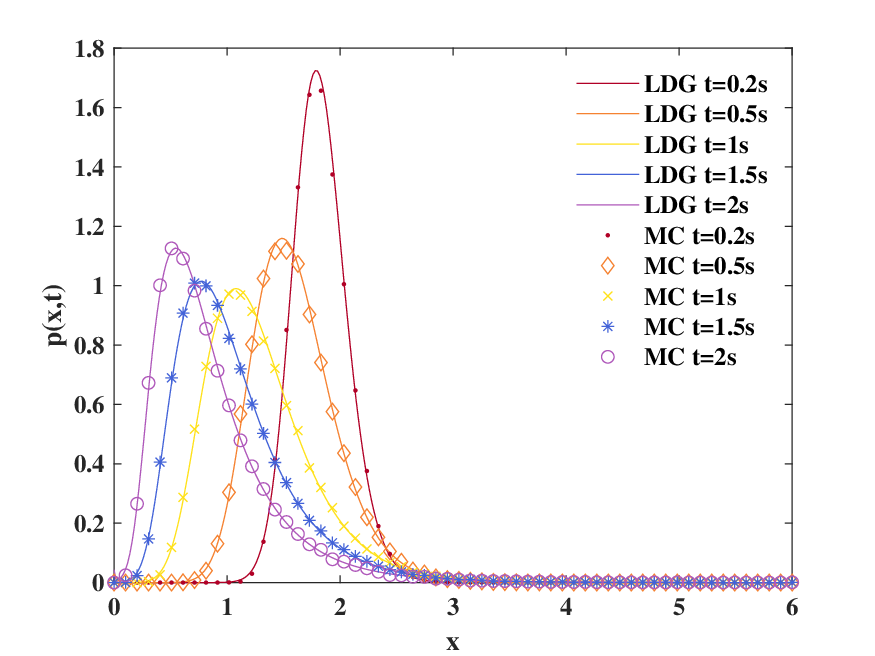}}
		\setcounter {subfigure} {0} (b){\includegraphics[scale = 0.5]{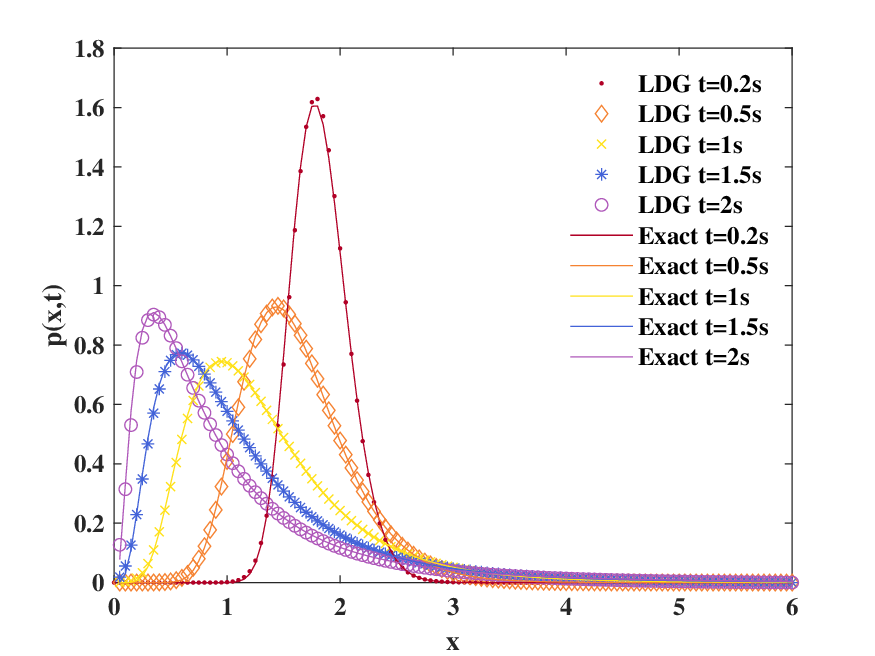}}
		\caption{PDF evolution of the memory-dependent PDEE (\ref{ldg1139}): (a) the comparisons among MC and LDG solutions with $ a=-0.5, b=0.25, c=0.25 $; (b) the comparisons among exact and LDG solutions with $ a=-0.5, b=0, c=0.5 $.}
		\label{fig3-9}
	\end{figure}

	We also calculate the $ L^2 $ and $ L^{\infty} $ error at the different times $t$ in $b=0$ case with parameters $ \Delta x=0.1, R=[0,6], \Delta t=0.0001 $ for both LDG and FD methods. The results are presented in Table \ref{tab3}. We find that the LDG method provides more accurate transient solutions compared to the FD method.

	\begin{table}[h]
		\caption{$ L^2 $ and $ L^{\infty} $ error between the LDG/FD solutions and the exact solutions of (\ref{ldg1139}) with $ a=-0.5, b=0, c=0.25, \Delta x=0.1,\Delta t=0.0001$.}\label{tab3}
		\begin{tabular*}{\textwidth}{@{\extracolsep\fill}lcccccc}
			\toprule%
			& \multicolumn{5}{@{}c@{}}{$ L^2 $ error} \\\cmidrule{2-6}%
			Method & $t=0.2$ s & $t=0.5$ s& $t=1.0$ s& $t=1.5$ s& $t=2.0$ s\\
			\midrule
			LDG  & 1.4133E-02 & 4.0188E-03 & 3.1496E-03 &  6.28630E-03 & 9.7419E-03   \\
			FD  & 5.0023E-02 & 1.3976E-02 & 9.1802E-03 &  1.2869E-02 & 2.2434E-02    \\
			\midrule
			& \multicolumn{5}{@{}c@{}}{$ L^{\infty} $ error} \\\cmidrule{2-6}%
			Method & $t=0.2$ s & $t=0.5$ s& $t=1.0$ s& $t=1.5$ s& $t=2.0$ s\\
			\midrule
			LDG & 2.3293E-02 & 6.3233E-03 & 4.6164E-03 &  1.2641E-02 & 1.5727E-02   \\
			FD  & 7.9791E-02 & 2.1090E-02 & 1.4716E-02 &  2.4770E-02 & 5.3389E-02  \\
			\bottomrule
		\end{tabular*}
	\end{table}
	
	To proceed, we consider the linear Langevin equation with time-dependent coefficients, i.e., $ f(t,x)=atx, g(t,x)=b\sqrt{t}x, h(t,x)=ct^dx$. Eq. \eqref{mSDEline} is understood as following
	\begin{equation}\label{example1-6}
	\mathrm{d}X_t= atX_t\mathrm{d}t+b\sqrt{t}X_t\circ\mathrm{d}W_t+ct^dX_t\circ \mathrm{d}B^H_t,\qquad X_0=x_0,
	\end{equation}
	where $ a, b, c $ and $ d $ are constants. By Theorem \ref{fplinax}, the PDFs of (\ref{example1-6}) satisfy the following memory-dependent PDEE	\begin{equation}\label{ldg1136}
	\frac{\partial }{\partial t}p(x,t)=-\frac{\partial }{\partial x}\big\{\big(atx+\frac{1}{2}b^2tx+\hat{C}_tx\big)p(x,t)\big\}+\frac{\partial^2 }{\partial x^2}\big\{\big(\frac{1}{2}b^2tx^2+\hat{C}_tx^2\big)p(x,t)\big\},
	\end{equation}
	where $ \hat{C}_t=c^2t^{2d+2H-1}H\frac{\Gamma(2H)\Gamma(1+d)}{\Gamma(d+2H)} $ and the initial condition is $ p (x, 0) = \delta (x-x_0) $.
	
    Using the LDG method, (\ref{ldg1136}) is numerically calculated with the parameters $ \Delta x=0.1, \Delta t=0.0004 $ and $ a=-0.25, b=0.25, c=0.25, d=0.8, x_0=2$. The LDG solutions at different times $t$ are calculated and compared with MC solutions in Fig. \ref{fig3-6}, which indicates the good accuracy of the numerical scheme.
	
	\begin{figure}[t!]
		\centering
		\subfigure{\includegraphics[scale = 1]{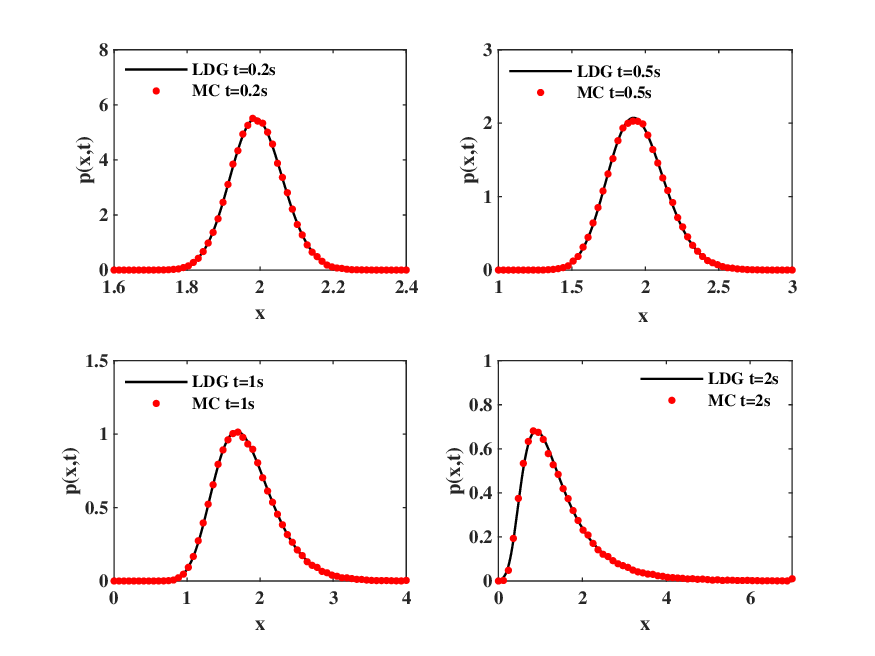}}
		\caption{PDF evolution of the memory-dependent PDEE (\ref{ldg1136}) with $ a=-0.25, b=0.25, c=0.25, d=0.8$.}
		\label{fig3-6}
	\end{figure}
	
	\subsection{The nonlinear Langevin equation excited by combined FGN and GWN}
	We first consider the nonlinear Langevin equation excited by combined multiplicative FGN and GWN. Specifically, the functions are defined as $ f(t,x)=a(x-dx^3), g(t,x)=b(x-dx^3) $, and $ h(t,x)=c(x-dx^3)$. Consequently, the nonlinear SDEs (\ref{mSDEscom}) can be expressed as follows
	\begin{equation}\label{example1-10}
	\mathrm{d}X_t= a(X_t-dX_t^3)\mathrm{d}t+b(X_t-dX_t^3)\circ\mathrm{d}W_t+c(X_t-dX_t^3)\circ\mathrm{d}B^H_t,\qquad X_0=x_0,
	\end{equation}
	where $ a, b, c $ and $ d $ are constants. By Theorem \ref{fpcomm}, the PDFs of (\ref{example1-10}) satisfy the following memory-dependent PDEE
	\begin{align}\label{ldg11310}
	\frac{\partial }{\partial t}p(x,t)=&-\frac{\partial }{\partial x}\big\{\big(a(x-dx^3)+\big(\frac{1}{2}b^2+Ht^{2H-1}c^2\big)(1-3dx^2)(x-dx^3)\big)p(x,t)\big\}\cr
    &+\frac{\partial^2 }{\partial x^2}\big\{\big(\frac{1}{2}b^2+Ht^{2H-1}c^2\big)(x-dx^3)^2p(x,t)\big\},
	\end{align}
	with the initial condition $ p (x, 0) = \delta (x-x_0) $.
	
	The parameters $ \Delta x=0.025, R=[0,1.5], \Delta t=0.0011 $ and $ a=-1, b=c=d=0.5, H=0.8, x_0=0.4$ are selected to calculate the LDG solutions at different times $t$. These solutions are compared with MC solutions in Fig. \ref{fig3-10} (a). The PDF evolution surface of the LDG solutions is shown in Fig. \ref{fig3-10} (b), demonstrating that the LDG scheme performs well even in the nonlinear case.
	
	\begin{figure}[H]
		\centering
		\setcounter {subfigure} {0} (a){\includegraphics[scale = 0.5]{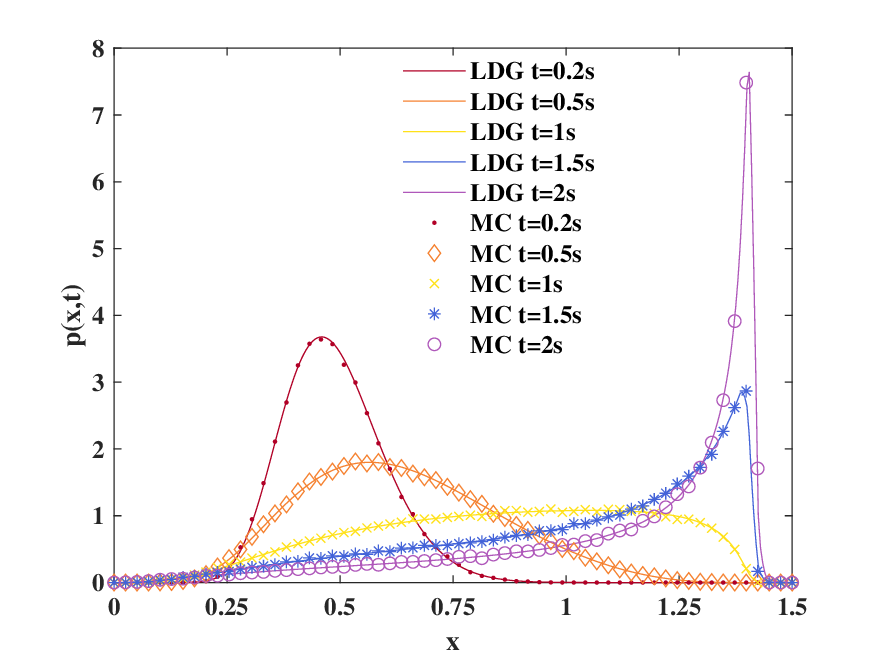}}
		\setcounter {subfigure} {0} (b){\includegraphics[scale = 0.5]{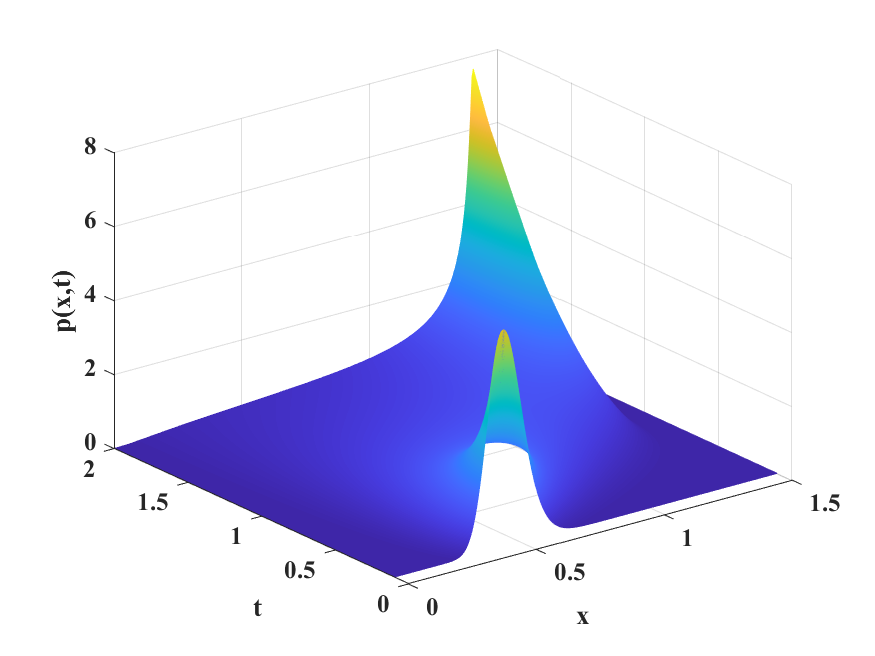}}
		\caption{The comparisons between MC and LDG solutions (a) of the memory-dependent PDEE (\ref{ldg11310}) and PDF evolution surface of the LDG solutions (b) with $a=-1, b=c=d=0.5, H=0.8, x_0=0.4$.}
		\label{fig3-10}
	\end{figure}
	
	To close this section, we consider the nonlinear Langevin equation excited by multiplicative FGN without drift term, i.e., $f(t,x)=g(t,x)\equiv0, h(t,x)=\sqrt{1+\sigma x^2}$, the nonlinear SDEs (\ref{SDEscomxh}) is understood as following
	\begin{equation}\label{example1-8}
	\mathrm{d}X_t= \sqrt{1+\sigma X_t^2}\circ\mathrm{d}B^H_t,\qquad X_0=x_0.
	\end{equation}
	By Corollary \ref{fpcomm-onlyh}, we obtain the following memory-dependent PDEE for PDFs of (\ref{example1-8})
	\begin{equation}\label{ldg1138}
	\frac{\partial }{\partial t}p(x,t)=-\frac{\partial }{\partial x}\big\{Ht^{2H-1}\sigma x p(x,t)\big\}+
	\frac{\partial^2 }{\partial x^2}\big\{Ht^{2H-1}(1+\sigma x^2)p(x,t)\big\},
	\end{equation}
	with the initial condition $ p (x, 0) = \delta (x-x_0) $. The exact solution is
	\begin{equation*}
	p(x,t)=C_1\frac{1}{\sqrt{\frac{1}{2}t^{2H}(1+\sigma x^2)}}\exp\bigg\{-\frac{\big(\ln(\sqrt{\sigma}x+\sqrt{1+\sigma x^2}\big)^2}{2t^{2H}\sigma}\bigg\},
	\end{equation*}
	where $C_1$ is the normalization constant.

	We select the parameters $ \Delta x=0.1, \Delta t=0.0006 $ and $\sigma=0.1, H=0.8, x_0=0$ to numerically calculate the memory-dependent PDEE (\ref{ldg1138}) by using LDG method. The LDG solutions at different times $t$ are compared with the MC and exact solutions in Fig. \ref{fig3-8}, demonstrating good agreement.
	
	\begin{figure}[t!]
		\centering
		\setcounter {subfigure} {0} (a){\includegraphics[scale = 0.50]{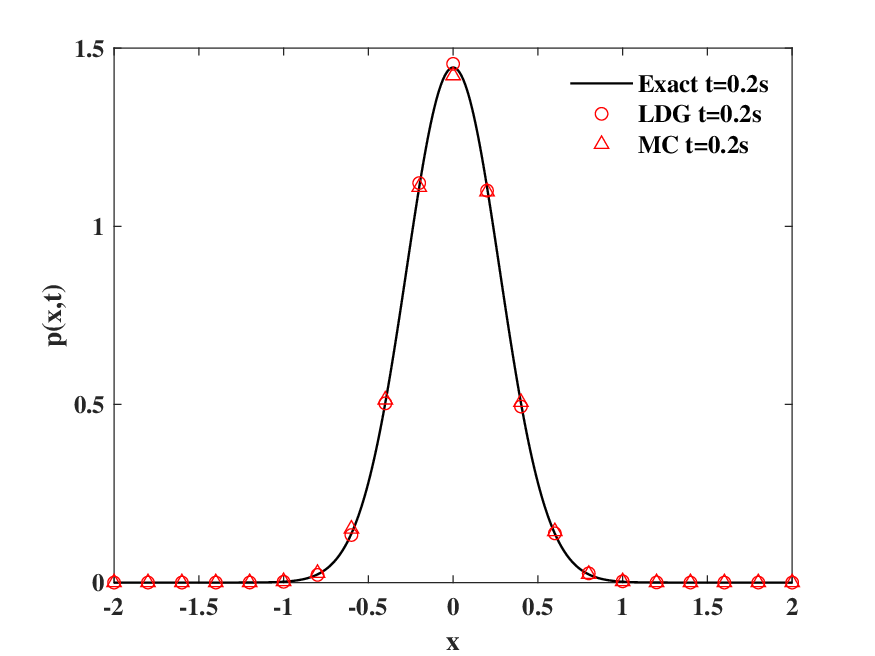}}
		\setcounter {subfigure} {0} (b){\includegraphics[scale = 0.50]{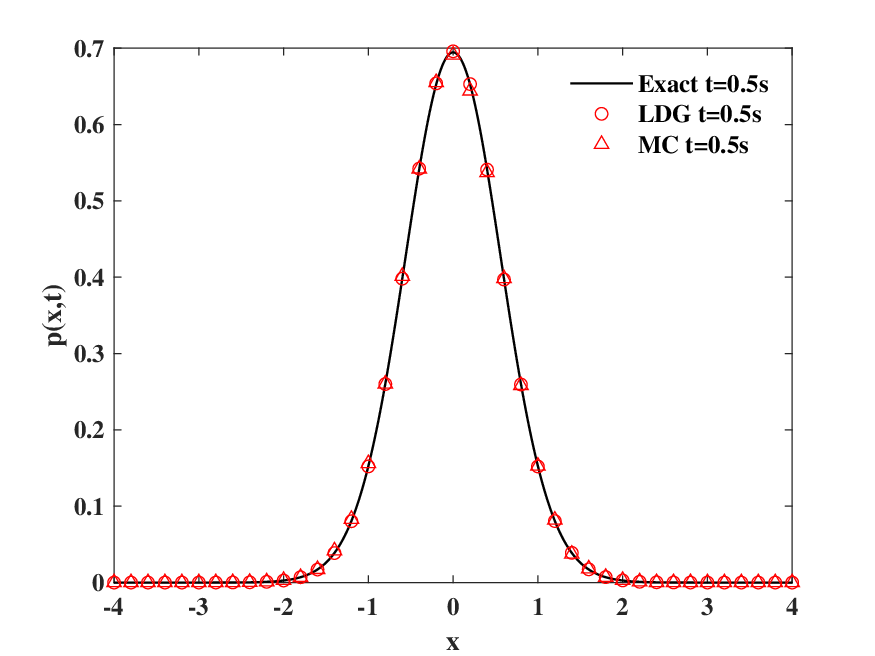}}
		\setcounter {subfigure} {0} (c){\includegraphics[scale = 0.50]{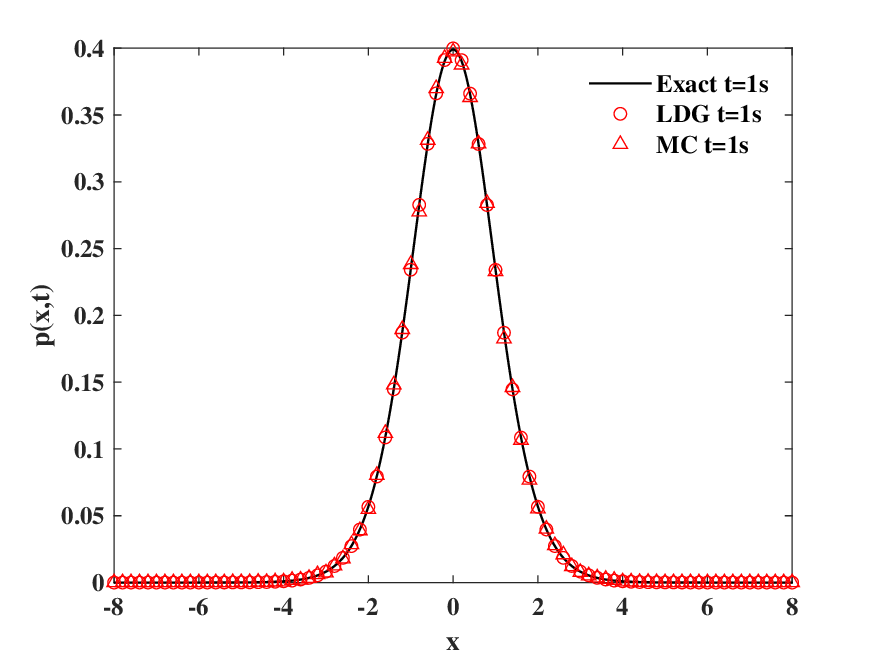}}
		\setcounter {subfigure} {0} (d){\includegraphics[scale = 0.50]{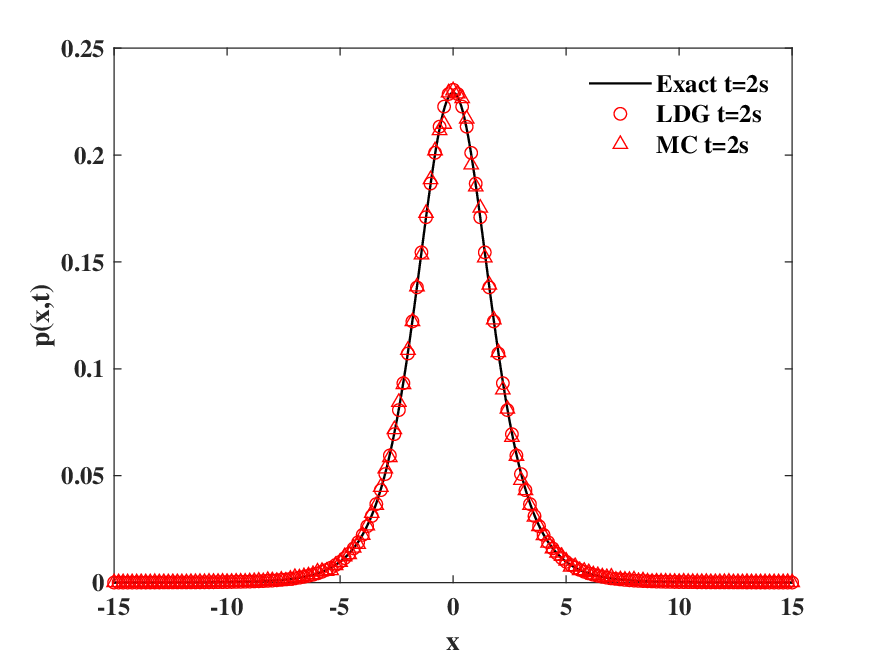}}
		\caption{The comparisons among the exact, MC and LDG solutions of the memory-dependent PDEE (\ref{ldg1138}) for different times $t$ with $\sigma=0.1, H=0.8, x_0=0$.}
		\label{fig3-8}
	\end{figure}
	
	For a fixed time $t$, we investigate the PDF evolution caused by FGN and the LDG solutions of the memory-dependent PDEEs (\ref{ldg11310}) and (\ref{ldg1138}) for different Hurst parameters at $ t=0.5$ s. The results are shown in Fig. \ref{fig3-10-2} (a) and (b), demonstrating the efficiency of the LDG method for nonlinear Langevin equations with varying memory properties.
	
	\begin{figure}[t!]
		\centering
		\setcounter {subfigure} {0} (a){\includegraphics[scale = 0.5]{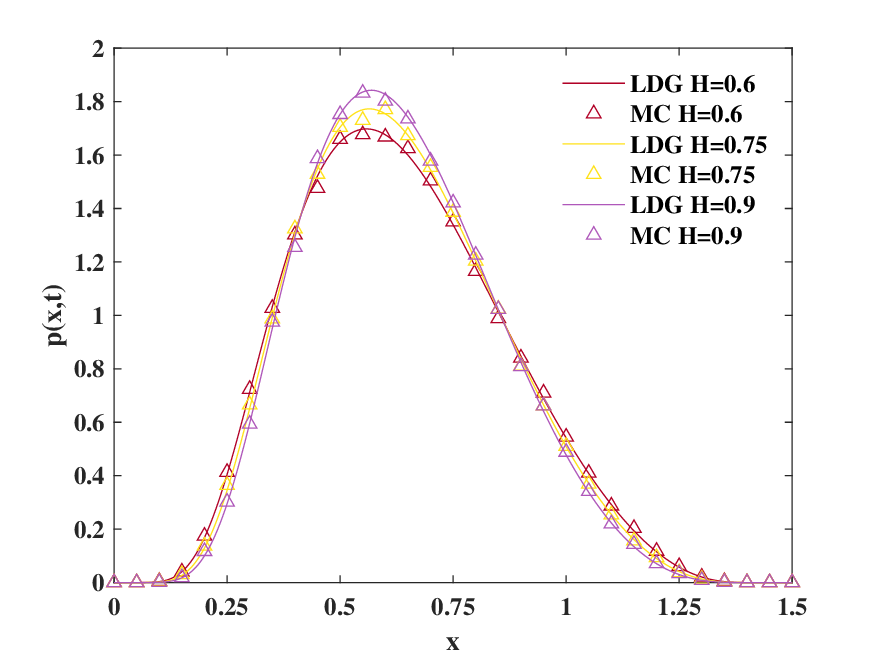}}
		\setcounter {subfigure} {0} (b){\includegraphics[scale = 0.5]{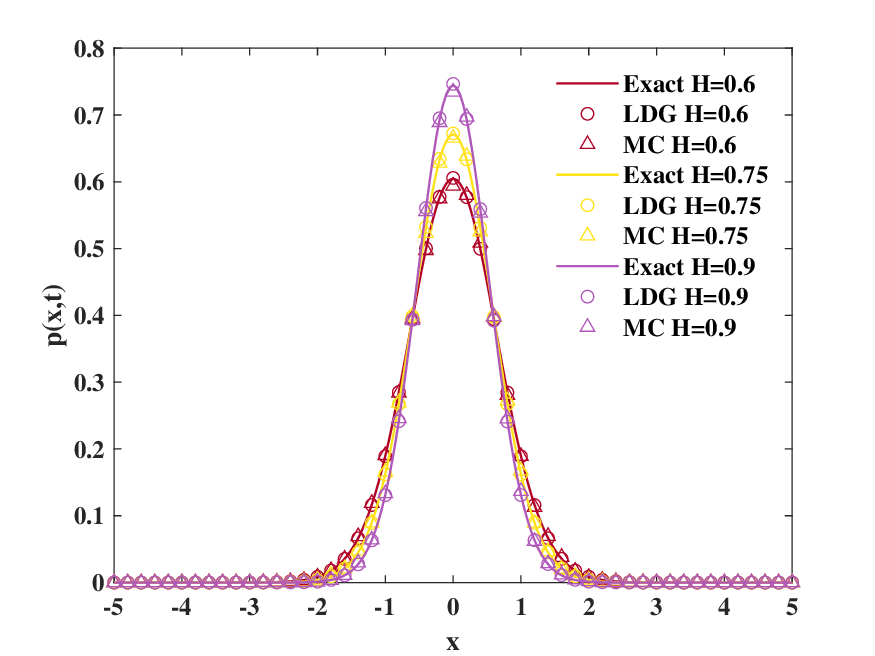}}
		\caption{The comparisons among MC and LDG solutions (a) of the memory-dependent PDEE (\ref{ldg11310}) and the comparisons among exact, MC and LDG solutions (b) of the memory-dependent PDEE (\ref{ldg1138}) for different Hurst parameters at $ t=0.5 $ s.}
		\label{fig3-10-2}
	\end{figure}
	
	\section{Discussions and conclusions}\label{sec13}
	By using the FWIS integral and rough path theory, a novel non-Markovian PDEM to derive the memory-dependent PDEEs is presented for the PDFs of the nonlinear Langevin equation excited by combined FGN and GWN. 
   It is a breakthrough to non-Markovian stochastic dynamics. However, the current non-Markovian PDEM still has some limitations for one-dimension and the commutative conditions setting. In future, we will try to obtain the memory-dependent PDEEs for PDFs of the nonlinear SDEs with high dimensions and much more general drift and diffusion terms without the commutativity conditions.
    Additionally, the LDG method is developed to address the numerical solutions of the memory-dependent PDEEs. To compare with the PI and FD methods, we demonstrate that the LDG solutions exhibit higher precision, as quantified by $ L^2 $ and $ L^{\infty} $ errors relative to the exact solutions used as benchmarks. Notably, this study represents the pioneering application of the LDG method to solve the memory-dependent PDEEs for the PDFs of the nonlinear Langevin equation excited by combined FGN and GWN. It is worth emphasizing that the LDG method not only obtains stationary solutions but also provides transient solutions with elevated precision. In contrast, the PI method struggles to solve the memory-dependent PDEEs for the non-Markovian nonlinear Langevin equation excited by FGN and exhibits more limitations when high-precision results are required.

	\backmatter
	
	\bmhead{Acknowledgements}
	B. Pei and L. Feng  were supported by National Natural Science Foundation (NSF) of China under Grant No. 12172285, Guangdong Basic and Applied Basic Research Foundation under Grant No. 2022A1515011853, and Shaanxi Fundamental Science Research Project for Mathematics and Physics under Grant No. 22JSQ027. Y. Li was supported by NSF of Shandong Province under Grant No. ZR2023ZD35, NSF of China under Grant No. 12301566, Science and Technology Commission of Shanghai Municipality under Grant No. 23JC1400300, Chenguang Program of Shanghai Education Development Foundation and Shanghai Municipal Education Commission under Grant No. 22CGA01. Y. Xu was supported by the Key International (Regional) Joint Research Program of the NSF of China under Grant No. 12120101002.
	
	\bmhead{Data availability}
	Data sharing is not applicable to this article as no datasets were generated or analyzed during the current study.
	
	\bmhead{Declarations}
	No.
	\bmhead{Conflict of interest}
	The authors declare that they have no conflict of interest.
	
	\begin{appendices}
		
		{\color{blue}	\section{The Proof of Lemmas}\label{secA1}}
		In this section, we will give lemma proofs that are not given in the paper.
		\subsection{The Proof of Lemma \ref{Malianx}}\label{app-Malianx}
		
		\begin{proof}
			By applying Lemma \ref{sym-path-to-fwick}, we convert (\ref{mSDEline}) into integral form
			\begin{align*} \mathrm{d}X_t&=A_tX_t\mathrm{d}t+B_tX_t\circ\mathrm{d}W_t+C_tX_t\circ \mathrm{d}B^H_t\cr
			&=A_tX_t\mathrm{d}t+B_tX_t\circ\mathrm{d}W_t+C_tX_t\diamond\mathrm{d}B^H_t+C_tD^{\phi}_tX_t\mathrm{d}t.
			\end{align*}
			Using Lemma \ref{mal-to-fwick} for fixed $r$, we get for all $t\in[0,T]$:
			\begin{align}\label{Malz}
			\mathrm{d}D^{\phi}_rX_t=& A_tD^{\phi}_rX_t\mathrm{d}t+B_tD^{\phi}_rX_t\circ\mathrm{d}W_t+C_tD^{\phi}_rX_t\diamond\mathrm{d}B^H_t\cr
			&+\phi(r,t)C_tX_t\mathrm{d}t+C_tD^{\phi}_r(D^{\phi}_tX_t)\mathrm{d}t,
			\end{align}
			
			To solve $D^{\phi}_tX_t$, define $z_t := \rho(r,t)X_t$, where $\rho(r,t)$ is deterministic and differentiable with $\rho(r,0) = 0$. Applying Lemma \ref{mal-to-fwick} again, we calculate $z_t$ as follows
			\begin{align}\label{Malztila}
			\mathrm{d}z_t=&\rho_t(r,t)X_t\mathrm{d}t+\rho(r,t)A_tX_t\mathrm{d}t+\rho(r,t)B_tX_t\circ\mathrm{d}W_t+\rho(r,t)C_tX_t\diamond\mathrm{d}B^H_t+\rho(r,t)C_tD^{\phi}_tX_t\mathrm{d}t\cr
			=&\rho_t(r,t)X_t\mathrm{d}t+A_tz_t\mathrm{d}t+B_tz_t\circ\mathrm{d}W_t+C_tz_t\diamond\mathrm{d}B^H_t+\rho(r,t)C_tD^{\phi}_tX_t\mathrm{d}t.
			\end{align}
			
			Comparing (\ref{Malztila}) with (\ref{Malz}), we conclude
			\begin{align*}
			\rho(r,t)=\int_{0}^{t}\phi(r,s)C_s\mathrm{d}s.
			\end{align*}
			and
			\begin{align*}
C_tD^{\phi}_r(D^{\phi}_tX_t)&=C_tD^{\phi}_r\Big(X_t\int_{0}^{t}\phi(t,s)C_s\mathrm{d}s\Big)=C_t\int_{0}^{t}\phi(t,s)C_s\mathrm{d}sD^{\phi}_rX_t=C_t\int_{0}^{t}\phi(t,s)C_s\mathrm{d}s\rho(r,t)X_t\cr
			&=\rho(r,t)C_tD^{\phi}_tX_t.
			\end{align*}
			Then $ D^{\phi}_tX_t= z_t$ due to the uniqueness of (\ref{Malz}) \cite{mishura2012mixed}. Thus, we have
			\begin{equation*}
			D^{\phi}_tX_t= X_t\int_{0}^{t}\phi(t,s)C_s\mathrm{d}s, \, \forall t\in[0,T], \, a.s.
			\end{equation*}
		\end{proof}

		\subsection{The Proof of Lemma \ref{itolinax}}\label{app-itolinax}
		\begin{proof}
			We convert the Stratonovich integral $\int X \circ \mathrm{d}W$ to the It\^o integral $\int X \mathrm{d}W$. Consequently, equation (\ref{mSDEline}) becomes equivalent to
			\begin{equation}\label{SDEslinex-ito}
			\mathrm{d}X_t=\big(A_t X_t+\frac{1}{2}B^2_t X_t\big) \mathrm{d}t+B_t X_t \mathrm{d}W_t+C_t X_t \circ \mathrm{d}B^H_t.
			\end{equation}
			
			By Theorem 3.1 in \cite{sonmez2023mixed}, and using Lemmas \ref{sym-path-to-fwick} and \ref{Malianx}, we obtain
			\begin{align*} 
			\mathrm{d}F(t,X_t)=&\Big(\frac{\partial F}{\partial t}(t,X_t)+\big(A_tX_t+\frac{1}{2}B^2_t X_t\big)\frac{\partial F}{\partial x}(t,X_t) +\frac{1}{2}B^2_tX_t^2\frac{\partial^2 F}{\partial x^2}(t,X_t)\Big)\mathrm{d}t\cr
			&+B_tX_t\frac{\partial F}{\partial x}(t,X_t)\mathrm{d}W_t
			+C_tX_t\frac{\partial F}{\partial x}(t,X_t) \circ \mathrm{d}B^H_t\cr
			=&\Big(\frac{\partial F}{\partial t}(t,X_t)+\big(A_tX_t+\frac{1}{2}B^2_t X_t\big)\frac{\partial F}{\partial x}(t,X_t) +\frac{1}{2}B^2_tX_t^2\frac{\partial^2 F}{\partial x^2}(t,X_t)\Big)\mathrm{d}t\cr
			&+B_tX_t\frac{\partial F}{\partial x}(t,X_t)\mathrm{d}W_t
			+C_tX_t\frac{\partial F}{\partial x}(t,X_t) \diamond \mathrm{d}B^H_t\cr
			&+\Big(C_tD^{\phi}_tX_t\frac{\partial F}{\partial x}(t,X_t)+C_tX_t D^{\phi}_tX_t\frac{\partial^2 F}{\partial x^2}(t,X_t)\Big)\mathrm{d}t\cr
			=&\Big(\frac{\partial F}{\partial t}(t,X_t)+\big(A_tX_t+\frac{1}{2}B^2_tX_t+\hat{C}_tX_t\big)\frac{\partial F}{\partial x}(t,X_t) +\big(\frac{1}{2}B^2_tX_t^2+\hat{C}_tX_t^2\big)\frac{\partial^2 F}{\partial x^2}(t,X_t)\Big)\mathrm{d}t\cr
			&+B_tX_t\frac{\partial F}{\partial x}(t,X_t)\mathrm{d}W_t
		+C_tX_t\frac{\partial F}{\partial x}(t,X_t) \diamond\mathrm{d}B^H_t,
			\end{align*}
			where $ \hat{C}_t= C_t\int_{0}^{t}\phi(t,r)C_r\mathrm{d}r$.	
		\end{proof}
		
		\subsection{The Proof of Lemma \ref{fplinax}}\label{app-fplinax}
		\begin{proof}
			Let $G(x)$ be a twice-differentiable function. By substituting $F(t, x) = G(x)$ into Lemma \ref{itolinax}, taking expectations on both sides, we obtain
			\begin{align}\label{fp1}
			\frac{\mathrm{d}}{\mathrm{d}t}\mathbb{E}[G(X_t)]=&\mathbb{E}\big[\big(A_tX_t+\frac{1}{2}B^2_tX_t+\hat{C}_tX_t\big)G'(X_t) \big]+\mathbb{E}\big[\big(\frac{1}{2}B^2_tX_t^2+\hat{C}_tX_t^2\big)G''(X_t)\big]
			=:\Sigma_1.
			\end{align}
			
			Using integration by parts, Fubini's theorem, and the fact that $p(-\infty,t) = p(\infty,t) = 0$, the right-hand side of equation (\ref{fp1}) can be expressed as
			\begin{align*}
			\Sigma_1=&
			\int_{\mathbb{R}}\Big\{\big(A_tx+\frac{1}{2} B^2_tx+\hat{C}_tx\big)p(x,t)\Big\}G'(x) \mathrm{d}x
			+\int_{\mathbb{R}}\Big\{\big(\frac{1}{2}B^2_tx^2+\hat{C}_tx^2\big)p(x,t)\Big\}G''(x)\mathrm{d}x\cr
			=&
			-\int_{\mathbb{R}}G(x)\frac{\partial }{\partial x}\Big\{\big(A_tx+\frac{1}{2} B^2_tx+\hat{C}_tx\big)p(x,t)\Big\}\mathrm{d}x-\int_{\mathbb{R}}G'(x)\frac{\partial }{\partial x} \Big\{\big(\frac{1}{2}B^2_tx^2+\hat{C}_tx^2\big)p(x,t)\Big\}\mathrm{d}x\cr
			=&
			-\int_{\mathbb{R}}G(x)\frac{\partial }{\partial x}\Big\{\big(A_tx+\frac{1}{2} B^2_tx+\hat{C}_tx\big)p(x,t)\Big\}\mathrm{d}x+\int_{\mathbb{R}}G(x)\frac{\partial^2 }{\partial x^2} \Big\{\big(\frac{1}{2}B^2_tx^2+\hat{C}_tx^2\big)p(x,t)\Big\}\mathrm{d}x.
			\end{align*}
			
			Since $ G(x) $ is arbitrary, so we get the desired result.
		\end{proof}
		
		\subsection{The Proof of Lemma \ref{sde-to-rde}}\label{app-sde-to-rde}
		\begin{proof}
			Rewriting the Stratonovich integral  as It\^o integral with Wong-Zakai correction,
			\begin{align}\label{mSDEscom-ito}
			\mathrm{d}X^{\mathrm{I}}_t= f^{\mathrm{I}}(X^{\mathrm{I}}_t)\mathrm{d}t+g(X^{\mathrm{I}}_t) \mathrm{d}W_t+h(X^{\mathrm{I}}_t) \circ  \mathrm{d}B^H_t,
			\end{align}
			where $f^{\mathrm{I}}(\cdot) = f(\cdot) + \frac{1}{2} g(\cdot) \frac{\partial g}{\partial x}(\cdot)$, and $X^{\mathrm{I}}$ denote the solutions of (\ref{mSDEscom-ito}) in the Itô sense. Then, the solutions $X$ of the SDEs (\ref{mSDEscom}) is equivalent to the solutions of the Itô SDEs (\ref{mSDEscom-ito}) in the mean square sense.
			
			Using a similar technique as in \cite[Theorem 4]{neuenkirch2015relation}, the solutions $X^{\mathrm{R}}$ of the RDEs (\ref{rde1}) and $X^{\mathrm{I}}$ of the Itô  SDEs (\ref{mSDEscom-ito}) coincide in probability under the given coefficients:
			$$ f^{\mathrm{R}}(\cdot)=f^{\mathrm{I}}(\cdot)-\frac12g(\cdot)\frac{\partial g}{\partial x}(\cdot), g^{\mathrm{R}}(\cdot)=g(\cdot), h^{\mathrm{R}}(\cdot)=h(\cdot).$$
			Thus, by passing to a subsequence, we obtain the desired result.
		\end{proof}
		
		\subsection{The Proof of Lemma \ref{flowuniq}}\label{app-flowuniq}
		\begin{proof}
			Since the vector fields $V_i$ commute, their flows $(e^{tV_i})_{t \in \mathbb{R}}$ also commute. Define 
			\begin{equation*}
			F(x_0,t,y)=\big(e^{tV_0}\circ e^{y_1V_1}\circ \cdots \circ e^{y_dV_d}\big)(x_0),
			\end{equation*}
			for $(x_0, t, y) \in \mathbb{R}^n \times \mathbb{R} \times \mathbb{R}^d$.
			
			Applying the change of variable formula, we see that the process $\big(e^{B_t^{H,d}V_d}(x_0)\big)_{t \geq 0}$ is a solution to
			\begin{equation*}
			\mathrm{d}\big(e^{B_t^{H,d}V_d}(x_0)\big)=V_d\big(e^{B_t^{H,d}V_d}(x_0)\big)\mathrm{d}B^{H,d}_t,
			\end{equation*}
			and the process $\big(e^{tV_0}(x_0)\big)_{t \geq 0}$ is a solution to
			\begin{equation*}
			\mathrm{d}\big(e^{tV_0}(x_0)\big)=V_0\big(e^{tV_0}(x_0)\big)\mathrm{d}t.
			\end{equation*}
			
			Using It\^o's formula and the commutativity of $V_{d-1}$ and $V_d$, we have
			\begin{align*}
			\mathrm{d}\big(e^{B_t^{H,d-1}V_{d-1}}(e^{B_t^{H,d}V_d}(x_0))\big)=&V_{d-1}\big(e^{B_t^{H,d-1}V_{d-1}}(e^{B_t^{H,d}V_d}(x_0))\big)\mathrm{d}B_t^{H,d-1}\cr
			&+V_d\big(e^{B_t^{H,d-1}V_{d-1}}(e^{B_t^{H,d}V_d}(x_0))\big)\mathrm{d}B_t^{H,d},
			\end{align*}
			
			Similarly, since $V_0$ and $V_1$ commute,
			\begin{align*}
			\mathrm{d}\big(e^{tV_0}(e^{B_t^{H,1}V_1}(x_0))\big)=V_0\big(e^{tV_0}(e^{B_t^{H,1}V_1}(x_0))\big)\mathrm{d}t+V_1\big(e^{tV_0}(e^{B_t^{H,1}V_1}(x_0))\big)\mathrm{d}B_t^{H,1}.
			\end{align*}
			
			By iteratively applying the change of variable formula, the process $(F(t, x_0, B_t^H))_{t \geq 0}$ satisfies
			\begin{equation*}
			\mathrm{d}F(t,x_0,B^H_t)=V_0(F(t,x_0,B^H_t))\mathrm{d}t+\sum_{i=1}^{d}V_i(F(t,x_0,B^H_t))\mathrm{d}B^{H,i}_t.
			\end{equation*}
			
			Thus, by pathwise uniqueness for Eq. (\ref{RDEvec}), we conclude that
			\begin{equation*}
			X_t=F(t,x_0,B^H_t),\quad X_0=x_0,t\geq0.
			\end{equation*}
		\end{proof}
		
		\subsection{The Proof of Lemma \ref{fpcomm}}\label{app-fpcomm}
		\begin{proof}
			Let $G(x)$ be any twice-differentiable function. By Lemma \ref{flowuniq} and the commutativity conditions, we have
			\begin{align}\label{phipde}
			\frac{\mathrm{d} }{\mathrm{d} t}\mathbb{E}[G(X_t)]=&\mathbb{E}\big[f(X_t)G'(X_t)\big]+\frac{1}{2}\mathbb{E}\big[g(X_t)\big(g'(X_t)G'(X_t)+g(X_t)G''(X_t)\big)\big]\cr
			&+Ht^{2H-1}\mathbb{E}\big[h(X_t)\big(h'(X_t)G'(X_t)+h(X_t)G''(X_t)\big)\big].
			\end{align}
			
			Following a similar approach to Lemma \ref{fplinax}, we obtain the desired result.
		\end{proof}
		
	\end{appendices}

\end{document}